\newtheorem{theorem}{Theorem}[section]
\newtheorem{proposition}{Proposition}[section]
\newtheorem{lemma}{Lemma}[section]
\newtheorem{example}{Example}[section]
\newcommand{\proofbox}{\hspace{\fill}{$\Box$}}
\newenvironment{proof}{\textbf{Proof}.}{\proofbox}
\def\emptyset{\mbox{{\rm \O}}}
\def\bar{\overline}
\date{}
\numberwithin{equation}{section}
\begin{document}

\title{Regularity of  multipliers and second-order optimality conditions of KKT-type for semilinear parabolic control problems} 
	
\author{H. Khanh\footnote{Department of Optimization and Control Theory,	Institute of Mathematics, Vietnam Academy of Science and Technology,	18 Hoang Quoc Viet road, Hanoi, Vietnam  and Department of Mathematics, FPT University, Hoa Lac Hi-Tech Park, Hanoi, Vietnam; email: khanhh@fe.edu.vn} \  and  B.T. Kien\footnote{Department of Optimization and Control Theory, Institute of Mathematics, Vietnam Academy of Science and Technology,18 Hoang Quoc Viet road, Hanoi, Vietnam; email: btkien@math.ac.vn}}
	
\maketitle
	
\medskip
	
\noindent {\bf Abstract.} {\small A class of  optimal control problems governed by semilinear parabolic equations with mixed constraints and a box constraint for control variable is considered.   We show that if the separation condition is satisfied, then both optimality conditions of KKT-type and regularity of multipliers  are fulfilled. Moreover, we show that if the initial value is good enough and boundary $\partial\Omega$ has a property of positive geometric density, then multipliers and optimal solutions are H\"{o}lder continuous.}
	
\medskip
	
\noindent {\bf  Key words.} Regularity of multipliers, regularity of optimal solutions, Second-order optimality conditions,  semilinear parabolic control, mixed pointwise constraints.
	
\noindent {\bf AMS Subject Classifications.} 49K20, 35J25
	
\vspace{0.4cm}

\section{Introduction}
	
Let $\Omega$ be a bounded domain in $\mathbb{R}^N$ with $N=2$ or $3$ and its  boundary $\Gamma=\partial \Omega$ is of class $C^2$. Let $D=H^2(\Omega)\cap H_0^1(\Omega)$ and $H=L^2(\Omega)$. We consider  the problem of finding control function $u\in L^\infty(Q)$  and the corresponding state $y\in L^\infty(Q)\cap W^{1, 1}_2(0, T; D, H)$ which solve
\begin{align}
    &J(y, u):=\int_Q L(x, t,  y(x, t), u(x, t)) dxdt\to\inf \label{P1}\\
    &{\rm s.t.}\notag\\
    &\frac{\partial y}{\partial t} +Ay + f(y) =u\quad \text{in}\quad Q= \Omega  \times (0, T)\label{P2} \\
    & y(x, t)=0 \quad \text{on}\quad \Sigma=\Gamma\times [0, T], \quad y(x, 0)=y_0(x) \quad \text{on}\quad \Omega \label{P3}\\
    & g(x, t, y(x, t)) +\epsilon u(x,t)\leq 0 \quad \text{a.a.}\quad (x, t)\in Q \label{P4}\\
    & a\leq u(x, t)\leq b  \quad \text{a.a.}\quad (x, t)\in Q   \label{P5}
\end{align} where $y_0\in H^1_0(\Omega)\cap L^\infty(\Omega)$, $\epsilon > 0$, $a, b\in\mathbb{R}$ with $a<b$,  $A$ denotes a second-order elliptic operator of the form
$$
Ay =  - \sum_{i,j = 1}^N D_j\left(a_{ij}(x) D_i y \right),
$$  $L:Q \times {\mathbb R} \times {\mathbb R} \to {\mathbb R}$,  $f: {\mathbb R} \to {\mathbb R}$ and $g: \Omega\times[0, T]\times \mathbb{R} \to \mathbb{R}$ are of class $C^2$. In problem \eqref{P1}-\eqref{P5},  constraint \eqref{P4} may be seen as a regularization of the constraint $g(x, t, y(x, t))\leq 0$ which never satisfies regular conditions. Throughout of the paper, we denote by $\Phi$ the feasible set to problem \eqref{P1}-\eqref{P5}. 

The theory of optimal control governed by parabolic equations has been interested by many mathematicians so far. The mainly research topics in this area consist of the existence of optimal solutions, optimality conditions and numerical methods of computing  optimal solutions. In order to compute  optimal solutions, we need   optimality conditions of KKT type.  Based on first-order optimality conditions and regularity of multipliers and optimal solutions,  we can find  approximate solutions while the second-order sufficient conditions help us to give error estimates between approximate solutions and exact solutions. Therefore, the optimality conditions and regularity multipliers and optimal solutions play an important role in numerical methods such as the finite element method (FEM).  

In the theory of optimal control problems governed by partial differential equations, we often require that the control variables belong to $L^\infty$. This requirement make  the objective functional and constraint mappings  easily to be differentiable with respect to $u$.  However, in this situation, the Lagrange multipliers corresponding to constraints are finitely additive measures rather than  $L^p-$ functions. This motivates us to  study the regularity of Lagrange multipliers, that is to find conditions under which Lagrange multipliers are functions in $L^p$-spaces.

Recently, there have been many papers dealing with optimality conditions and regularity of mutipliers for  parabolic optimal control problems. For this we refer the reader to \cite{Arada-2000},  \cite{Arada-2002-1}, \cite{Arada-2002-2},  \cite{Casas-2000}, \cite{Raymond}, \cite{Rosch1}, \cite{Rosch2} and \cite{Troltzsch} and many references given therein. Among these papers, N. Arada and J.-P. Raymond \cite{Arada-2000}-\cite{Arada-2002-2} and  A.R\"{o}sch and F.Tr\"{o}ltzsch \cite{Rosch2} focused on first-order optimality conditions and regularity of multipliers as well as optimal solutions. As usual, in  order to derive optimality conditions of KKT-type, the authors need the Slater condition or the Robinson constraint qualification. Meanwhile, to obtain the regularity of multipliers, they required that the so-called separation conditions are satisfied. The separation conditions make sure that the Lagrange multipliers corresponding to mixed constraints are countably additive measures and so they can be represented by densities in $L^1$. 

In this paper we continue develop results in \cite{Arada-2000} and \cite{Rosch2} to derive first-and second order optimality conditions of KKT-type and to study regularity of multipliers and optimal solutions to problem \eqref{P1}-\eqref{P5}. We show that, under the separation conditions, both the Robinson constraint qualification and regularity of multipliers are valid. In order to establish such results, we will embed problem \eqref{P1}-\eqref{P5} into a specific mathematical programming problem in Banach spaces and derive optimality conditions under the Robinson constraint qualification. Some techniques from \cite{Rosch2} and  results of Yosida-Hewitt in  \cite{Yosida} are invoked and useful for our proofs. 

The paper is organized as follows. Section 2 is devoted to auxiliary results. The regularity of multipliers and second-order optimality conditions are provided in Section 3.  The higher regularity of multipliers and optimal solutions are pesented in Section 4. We show that if the boundary is good enough, then multipliers and optimal solutions are H\"{o}der continuous.

\section{Auxiliary results}

\subsection{A specific mathematical programming problem}

Let $E, W$  and $Z$ be Banach spaces with the dual spaces $E^*$, $W^*$ and $Z^*$.  Let $M$ be a nonempty and closed subset in $Z$ and $\bar z\in M$. The set
\begin{align*}
T(M, \bar z):&=\left\{h\in X\;|\; \exists t_n\to 0^+, \exists h_n\to h, \bar z+t_nh_n\in M \ \ \forall n\in\mathbb{N}\right\},
\end{align*}
is called  {\em the contingent cone} to $M$ at $\bar z$, respectively. The set 
$$
N(M, \bar z)=\{z^*\in Z^*| \langle z^*, z\rangle\leq 0\quad\forall z\in T(M, \bar z)\}
$$ is called the normal cone to $M$ at $\bar z$.  It is well-known that when $M$ is convex, then
$$
T (M; \bar w)=\overline{\rm cone}(M-\bar w)
$$ and 
$$
N(M, \bar z)=\{z^*| \langle z^*, z-\bar z\rangle \leq 0\quad \forall z\in M\},
$$
where 
$$
{\rm cone }(M-\bar w):=\{\lambda (h-\bar w)\;|\; h\in M, \lambda>0\}.
$$
Let $\psi: Z \to {\mathbb R}$; $F: Z \to E$ and $G: Z \to W$ be given mappings and $K$ be a nonempty closed convex subset in $W$. We now consider the optimization problem
\begin{align}
&\psi(z) \to \inf \label{mP1}\\
&\text{s.t.}\notag\\
&F(z) = 0, \label{mP2}\\
&G(z) \in K.\label{mP3}
\end{align}  By defining  $K_1=\{0\}\times K$ and $G_1=(F, G)$, we can formulate the problem \eqref{mP1}--\eqref{mP3} in the following form
\begin{equation}
\begin{cases}
&\psi(z) \to \inf \\
&\text{s.t.}\notag\\
&G_1(z) \in K_1.
\end{cases}
\end{equation} 
Given a feasible point $z_0$ of  \eqref{mP1}-\eqref{mP3}.  Then  $z_0$ is said to satisfy the Robinson constraint qualification  if
\begin{align}
\label{Robinson.1}
0\in {\rm int} \{G_1(z_0) + DG_1(z_0)Z -K_1\}. 
\end{align}
According to \cite[condition (2.190), page 71]{Bonnans-2000}, if ${\rm int}(K) \ne \emptyset$ then (\ref{Robinson.1}) is equivalent to 
\begin{align}
    \label{Robinson.2}
    \begin{cases}
        (i) \quad  DF(z_0) \  \ {\rm is \ surjective} \\
        (ii) \quad \exists \widetilde z \in {\rm Ker}DF(z_0) \ \ {\rm such \ that} \ \ G(z_0) +  DG(z_0) \widetilde z  \in {\rm int}(K).
    \end{cases}
\end{align}
 
We impose the following assumptions.

\noindent $(A^k_1)$ The mappings $\psi, F$ and $G$ are of class $C^k$ around $z_0$ with $k=1,2$;

\noindent $(A_2)$ $K$ is a closed convex set with nonempty interior;

\noindent $(A_3)$ $DF(z_0)Z$ is a closed subspace.

The Lagrange function associated with the problem is given by the formula
$$
\mathcal{L}(z, \lambda, e^*, w^*)=\lambda\psi(z) +\langle e^*, F(z)\rangle +\langle w^*, G(z)\rangle,
$$ where $z\in Z$, $\lambda\in\mathbb{R}$, $e^*\in E^*$ and $w^*\in W^*$. We say that a triple $(\lambda, e^*, w^*)$ are Lagrange multipliers at $z_0$ if 
\begin{align}
& D\mathcal{L}(z_0, \lambda, e^*, w^*)=\lambda \nabla\psi(z_0)+ DF(z_0)^* e^* + DG(z_0)^* w^*=0,\\
&\lambda\geq 0,\quad w^*\in N(K, G(z_0)).
\end{align} We denote by $\Lambda[z_0]$ the set of Lagrange multipliers $(\lambda, e^*, w^*)$ at $z_0$ and by $\Lambda_1[z_0]$ the set of Lagrange multipliers $(e^*, w^*)$ corresponding to  $\lambda=1$. 

We have the following result on the first-order optimality conditions. Its proof can be found in \cite{Kien-2022}. However, for conveniences of the readers, we provide here a simple proof of the result.  

\begin{proposition} \label{Pro-FirstOptim} Suppose that $z_0$ is a locally optimal solution to  under which $(A^1_1), (A_2)$ and $(A_3)$ are satisfied. Then the set $\Lambda(z_0)$ is nonempty. In addition, if the Robinson constraint qualification \eqref{Robinson.2} is fulfilled, then $\Lambda_1(z_0)$ is nonempty and weakly star compact. 
\end{proposition}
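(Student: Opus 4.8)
The plan is to carry everything over to the equivalent problem ``$\psi(z)\to\inf$ subject to $G_1(z):=(F(z),G(z))\in K_1:=\{0\}\times K$'' and to prove the first assertion by a case distinction according to whether the Robinson constraint qualification \eqref{Robinson.2} holds at $z_0$. When it fails, I expect to produce an \emph{abnormal} multiplier (one with $\lambda=0$) directly from $(A_2)$, $(A_3)$, the Hahn--Banach theorem and the closed range theorem, without using the optimality of $z_0$ at all; when it holds, the first assertion reduces to the second one, whose proof runs along the classical route: Robinson's stability theorem, the resulting description of the tangent cone to the feasible set, and a dualisation of the primal first-order condition ``$\langle\nabla\psi(z_0),z\rangle\ge 0$ on the linearised cone''.

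For the nonemptiness of $\Lambda[z_0]$ I distinguish three cases. (i) If $DF(z_0)$ is not onto, then by $(A_3)$ its range is a \emph{proper} closed subspace of $E$, so Hahn--Banach furnishes $e^*\in E^*\setminus\{0\}$ with $DF(z_0)^*e^*=0$, and then $(0,e^*,0)\in\Lambda[z_0]$. (ii) If $DF(z_0)$ is onto but no $\widetilde z\in{\rm Ker}\,DF(z_0)$ satisfies $G(z_0)+DG(z_0)\widetilde z\in{\rm int}\,K$, set $S:=G(z_0)+DG(z_0)({\rm Ker}\,DF(z_0))-{\rm int}\,K$; by $(A_2)$ this is a nonempty open convex subset of $W$ which does not contain $0$, so Hahn--Banach yields $w^*\ne 0$ with $\langle w^*,s\rangle\le 0$ for all $s\in S$ and hence, by density, for all $s\in G(z_0)+DG(z_0)({\rm Ker}\,DF(z_0))-K$. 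Taking $\widetilde z=0$ gives $-w^*\in N(K,G(z_0))$, and taking $k=G(z_0)$ gives that $DG(z_0)^*w^*$ annihilates the subspace ${\rm Ker}\,DF(z_0)$; since $DF(z_0)$ is onto, $({\rm Ker}\,DF(z_0))^\perp={\rm range}\,DF(z_0)^*$ by the closed range theorem, so $DG(z_0)^*w^*=DF(z_0)^*e^*$ for some $e^*\in E^*$ and $(0,e^*,-w^*)\in\Lambda[z_0]$. (iii) If \eqref{Robinson.2} holds, the second assertion --- proved below, independently of the first --- gives $(e^*,w^*)\in\Lambda_1[z_0]$, hence $(1,e^*,w^*)\in\Lambda[z_0]$.

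It remains to prove the second assertion, so assume \eqref{Robinson.2}, equivalently \eqref{Robinson.1}. Then $G_1$ is metrically regular with respect to $K_1$ at $z_0$ (Robinson's stability theorem), which yields $T(\mathcal{F},z_0)=\mathcal{C}_0:=\{z\in Z: DF(z_0)z=0,\ DG(z_0)z\in T(K,G(z_0))\}$, where $\mathcal{F}$ denotes the feasible set of \eqref{mP1}--\eqref{mP3}. A first-order Taylor expansion of $\psi$ along feasible arcs through $z_0$, together with local optimality, gives $\langle\nabla\psi(z_0),z\rangle\ge 0$ for every $z\in\mathcal{C}_0$, i.e. $\nabla\psi(z_0)$ lies in the positive dual cone of $\mathcal{C}_0={\rm Ker}\,DF(z_0)\cap DG(z_0)^{-1}(T(K,G(z_0)))$. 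Since \eqref{Robinson.1} forces $DG_1(z_0)Z-\mathbb R_+(K_1-G_1(z_0))$ to be all of $E\times W$, the generalized Farkas lemma applies and shows that this dual cone equals ${\rm range}\,DF(z_0)^*+DG(z_0)^*(-N(K,G(z_0)))$; hence there exist $e^*\in E^*$ and $w^*\in N(K,G(z_0))$ with $\nabla\psi(z_0)+DF(z_0)^*e^*+DG(z_0)^*w^*=0$, that is $(e^*,w^*)\in\Lambda_1[z_0]$. Finally, $\Lambda_1[z_0]$ is weakly star closed, because the adjoints $DF(z_0)^*$ and $DG(z_0)^*$ are continuous with respect to the weak star topologies and $N(K,G(z_0))$ is weakly star closed; and it is bounded, because \eqref{Robinson.2} is equivalent to the stability estimate $\|DF(z_0)^*e^*+DG(z_0)^*w^*\|\ge c\,(\|e^*\|+\|w^*\|)$ for all $e^*\in E^*$ and all $w^*\in N(K,G(z_0))$, which along $\Lambda_1[z_0]$ forces $\|e^*\|+\|w^*\|\le\|\nabla\psi(z_0)\|/c$. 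Banach--Alaoglu then shows that $\Lambda_1[z_0]$ is weakly star compact.

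I expect the real work to sit in the third paragraph: converting the primal inequality on the linearised cone into genuine multipliers is a conic duality (Farkas) statement which, in a Banach space, requires a closedness property of the relevant cone, and it is exactly here --- together with the reduction of $T(K_1,\cdot)$ and $N(K_1,\cdot)$ to $T(K,\cdot)$ and $N(K,\cdot)$ --- that \eqref{Robinson.2}, $(A_2)$ and $(A_3)$ are used in full strength. I intend to quote both the tangent-cone formula and the stability estimate from the perturbation-analysis results of \cite{Bonnans-2000} rather than reprove them.
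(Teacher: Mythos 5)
Your proposal is correct, but it follows a genuinely different route from the paper's at both places where real work is done. For existence of a (possibly abnormal) multiplier, the paper splits only into $DF(z_0)Z\ne E$ versus $DF(z_0)Z=E$; in the second case it builds the open convex set $C$ of triples $(\mu,e,w)$ realisable by some $z$ with $\nabla\psi(z_0)z<\mu$, $DF(z_0)z=e$, $DG(z_0)z-w\in{\rm cone}({\rm int}K-G(z_0))$, invokes Lemma 3.2 of \cite{Kien-2022} (where local optimality enters) to get $(0,0,0)\notin C$, separates to obtain $(\lambda,e^*,w^*)$ with $\lambda\ge 0$, and only afterwards shows $\lambda\ne 0$ under \eqref{Robinson.2}. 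You instead isolate the two ways \eqref{Robinson.2} can fail and manufacture abnormal multipliers directly --- Hahn--Banach on the proper closed range in case (i), and separation of $0$ from the open convex set $G(z_0)+DG(z_0)({\rm Ker}\,DF(z_0))-{\rm int}K$ combined with the closed range theorem in case (ii); your sign bookkeeping there (the multiplier is $(0,e^*,-w^*)$) is right, and this decomposition has the merit of making explicit that optimality plays no role in the degenerate cases. In the normal case you take the Zowe--Kurcyusz/Bonnans--Shapiro route (metric regularity, identification of the tangent cone to the feasible set with the linearised cone, the primal inequality, and a generalized Farkas lemma), whereas the paper extracts the normal multiplier from the same single separation argument; your route outsources more to \cite{Bonnans-2000} but is standard, and the surjectivity condition needed to drop the closure in the Farkas step does follow from \eqref{Robinson.1} as you claim. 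For the weak star compactness the paper simply cites \cite[Theorem 3.9]{Bonnans-2000}; your derivation via the dual stability estimate and Banach--Alaoglu is essentially the proof of that theorem (note that only the implication from \eqref{Robinson.2} to the estimate is needed, not the equivalence you assert).
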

\begin{proof} If $DF(z_0)Z\neq E$, then  there is a point $e_0\in E$ and $e_0\notin DF(\bar z)Z$. Since $DF(z_0)Z$ is a closed subspace,  the separation theorem (see \cite[Theorem 3.4]{Rudin}) implies that there exists a nonzero functional $e^*\in E^*$ which separate $e_0$ and $DF(\bar z)Z$. By a simple argument, we see that $DF(z_0)^* e^*=0$. Putting $\lambda=0, w^*=0$, we get $(0, e^*, 0)\in\Lambda(z_0)$.
		
Let us consider the case $D F(\bar z)Z= E$. In this situation, we define a set $C$ which consists of vectors $(\mu, e, w)\in\mathbb{R}\times E\times W$ such that there exists $z\in Z$ satisfying
\begin{align}
&\nabla \psi(z_0)z<\mu\\
&\nabla F(z_0)z=e \label{C1}\\
&\nabla G(z_0)z-w\in{\rm cone}({\rm int}K-G(z_0)) 
\end{align} It is clear that $C$ is convex. We now show that $C$ is open. In fact, take $(\hat\mu, \hat e,\hat w)\in C$ corresponding to $\hat z$. Choose $\epsilon>0$ such that $ \psi(z_0)\hat z< \hat\mu-\epsilon$. Then there is $\delta>0$ such that $\psi(z_0) z<\hat\mu-\epsilon$ for all $z\in B(\hat z, \delta)$. It follows that  
$$
\psi(z_0) z<\mu \quad \forall z\in B(\hat z, \epsilon) \text{and}\quad |\mu-\hat\mu|<\epsilon.  
$$
Since ${\rm cone}({\rm int} K-G(\bar z))$ is an open convex cone and
$$
DG(z_0)\hat z-\hat w \in {\rm cone}({\rm int}K-G(z_0)),
$$ the continuity of $DG(\bar z)$ implies that there exit  balls  $B(\hat z, \delta)$ and   $B(\hat w, r)$  such that 
$$
DG(z_0)z -w \in {\rm cone}({\rm int} D - G(z_0)) \quad  \forall (z,w)\in B(\hat z, \delta)\times B(\hat w, r).
$$ Since $DF(z_0)$ is surjective, $DF(z_0)[B(\hat z, \delta)]$ is open. Hence, there exists a number $\alpha>0$ such that 
$$
B(\hat e, \alpha)\subset DF(z_0)[B(\hat z, \delta)]. 
$$ Thus for any $(\mu, e)\in  (-\epsilon+\hat \mu, \epsilon+\hat\mu)\times B(\hat e, \alpha)$, there exists $z\in B(\hat  z, \delta)$ such that 
\begin{align*}
&\nabla\psi(z_0)z<\mu\\
&e=DF(z_0)z
\end{align*} such a $z$ satisfies
$$
DG(\bar z)z -w\in {\rm cone}({\rm int} D-G(\bar z))\quad \forall w\in B(\hat w, r).
$$
Thus we have shown that 
$$
(-\epsilon+\hat\mu, \epsilon+\hat\mu)\times B(\hat e, \alpha)\times B(\hat w, r) \subset C.
$$ Hence $C$ is open.  Using Lemma 3.2 in \cite{Kien-2022} for $m=1$ and $d=0$,  we see that $(0, 0, 0)\notin C$. By  the separation theorem (see \cite[Theorem 1, p. 163]{Ioffe-1979},  there exists a nonzero vector 
$(\lambda, e^*,  w^*)\in \mathbb{R}\times E^*\times \times W^*$ such that 
\begin{align}\label{Seperation2}
\lambda\mu + \langle e^*, e\rangle  +\langle w^*, w\rangle\geq 0\quad \forall (\mu, e,  w)\in C. 
\end{align} 
Fix any $z\in  Z$,  $w'\in{\rm cone }({\rm int }K-G(z_0))$ and $r>0$. Put
\begin{align*}
	& \mu=r+ \nabla \psi(z_0)z,\\
        &e=\nabla F(z_0)z,\\
	& w=\nabla G(z_0)z-w'
\end{align*} Then $(\mu, e, w)\in C$. From this and  \eqref{Seperation2}, we get
\begin{align*}
\lambda(\nabla \psi(z_0)z +r)+ \langle e^*, DF(z_0)z\rangle +\langle w^*, DG(z_0)z\rangle -\langle w^*, w'\rangle\geq 0.
\end{align*} If $\lambda<0$ the by letting $r\to+\infty$, we see that the term on the left hand side approach to $-\infty$ which is impossible. Hence we must have $\lambda\geq 0$  By letting $r\to 0$, we get 
\begin{align}\label{Seperation3}
&\lambda(\nabla \psi(z_0)z+\langle e^*, DF(z_0)z\rangle +\langle w^*, DG(z_0)z\rangle \geq 0\quad\forall z\in Z.
\end{align} This implies that 
\begin{align}\label{Separation3}
\lambda\nabla\psi(z_0) + DF(z_0)^* e^* + DG(z_0)^* w^*= 0
\end{align} 
and 
$$
\langle w^*, w'\rangle \leq 0\quad \forall w'\in{\rm cone}({\rm int}K-G(z_0)). 
$$ Hence $w^*\in N(K, G(z_0))$.  The first conclusion of the proposition is proved.  It remains to show that if the Robinson constraint qualification is valid, then $\lambda=1$. Conversely, suppose that $\lambda=0$. Then we have 
$$
DF(z_0)^* e^* + DG(z_0)^* w^*= 0.
$$ Let $\tilde z\in {\rm ker}DF(z_0)$ satisfy \eqref{Robinson.2}. Then we have 
$$
\langle DF(z_0)^* e^*, \tilde z\rangle  + \langle DG(z_0)^* w^*, \tilde z\rangle=0.
$$ Hence 
$$
\langle e^*,DF(z_0)\tilde z \rangle + \langle w^*, DG(z_0) \tilde z\rangle=0.
$$ This implies that 
$$
\langle w^*, DG(z_0) \tilde z\rangle=0,  
$$ where
$$
DG(z_0) \tilde z \in {\rm int}K- G(z_0)={\rm int}(K-G(z_0)). 
$$ Fixing any $w\in W$, we see that, there exists $s>0$ small enough such that 
$$
sw+ DG(z_0) \tilde z \in K-G(z_0).
$$  Since $w^*\in N(K, G(z_0))$, we have 
$$
\langle w^*, sw+ DG(z_0) \tilde z \rangle =s\langle w^*, w\rangle \leq 0. 
$$ Hence $\langle w^*, w\rangle \leq 0$ for all $w\in W$. This implies that $w^*=0$. Since 
$$
 DF(z_0)^* e^* + DG(z_0)^* w^*=0.
$$ we get $ DF(z_0)^* e^*=0$. Since $DF(z_0)$ is surjective, we get $e^*=0$. Consequently, $(\lambda, e^*, w^*)=(0,0,0)$ which is absurd. The boundedness and compactness of $\Lambda_1[z_0]$ follows from Theorem 3.9 in \cite{Bonnans-2000}.
\end{proof}

To deal with the second-order optimality conditions, we need to define the critical cone.  Let us denote by ${\mathcal C}_0[z_0]$  the set of vectors $d \in Z$ satisfying the following conditions:
\begin{itemize}
\item [$(a_1)$] $\nabla \psi(z_0)d \leq 0,$

\item [$(a_2)$] $DF(z_0)d = 0,$

\item [$(a_3)$] $DG(z_0)d  \in {\rm cone}(K-G(z_0))$
\end{itemize}  and by ${\mathcal C}[z_0]$ the closure of ${\mathcal C}_0[z_0]$ in $Z$, which is called the critical cone at $z_0$. Each vector $d'\in \mathcal{C}[z_0]$ is called a critical direction.  

We have the following result on  first and second-order optimality conditions. 

\begin{proposition}\label{Pro-SecondOptim} Suppose that $(A^2_1)-(A_3)$ are satisfied and  the Robinson constraint qualification \eqref{Robinson.2} is valid. Then if 
$z_0$ is a locally optimal solution, then for each $d\in \mathcal{C}_0[z_0]$, there exists $(e^*, w^*)\in \Lambda_1[z_0]$ such that 
\begin{align}
 D^2_z\mathcal{L}(z_0,  e^*, w^*)[d, d]=D^2\psi(z_0) d^2 + e^* D^2F(z_0)d^2 + w^*D^2G(z_0)d^2\geq 0. 
\end{align} 
\end{proposition}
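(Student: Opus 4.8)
The plan is to derive the second-order necessary condition from the classical "no second-order feasible descent" principle combined with the standard minimax argument that exchanges the order of an infimum over $d$-dependent multiplier sets. First I would fix $d \in \mathcal{C}_0[z_0]$ and construct, for the given direction, a feasible arc emanating from $z_0$. Because the Robinson constraint qualification \eqref{Robinson.2} holds, one has metric regularity of the constraint system $G_1(z)\in K_1$ at $z_0$; this is the key tool (available from \cite{Bonnans-2000}) that lets me correct an approximate second-order feasible curve into an exact one. Concretely, for small $t>0$ I would look for $z(t) = z_0 + t d + \tfrac{1}{2}t^2 w(t)$ with $w(t)$ bounded, such that $F(z(t))=0$ and $G(z(t))\in K$; the conditions $(a_2)$ and $(a_3)$ defining $\mathcal{C}_0[z_0]$ guarantee that the first-order obstruction vanishes (for $(a_3)$, $DG(z_0)d$ points into $\mathrm{cone}(K-G(z_0))$, so a feasible correction of order $t^2$ exists), and metric regularity supplies the bounded corrector $w(t)$.

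Next, I would Taylor-expand $\psi$ along this arc. Local optimality of $z_0$ gives $\psi(z(t)) \geq \psi(z_0)$ for $t$ small, hence
\[
0 \leq \psi(z(t)) - \psi(z_0) = t\,\nabla\psi(z_0)d + \tfrac{1}{2}t^2\big(\nabla\psi(z_0)w(t) + D^2\psi(z_0)d^2\big) + o(t^2).
\]
If $\nabla\psi(z_0)d < 0$ the linear term would force a contradiction, so in fact $\nabla\psi(z_0)d = 0$ for every $d$ in $\mathcal{C}_0[z_0]$ that admits such an arc; this reduces matters to genuine critical directions and kills the first-order term. Dividing by $t^2$ and letting $t\to 0^+$ then yields a lower bound of the form $D^2\psi(z_0)d^2 + \nabla\psi(z_0)w^* \geq 0$ for the limiting corrector direction $w^* = \lim w(t)$ (along a subsequence). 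The remaining task is to convert this "primal" second-order inequality, which still involves the corrector $w^*$, into the "dual" inequality over $\Lambda_1[z_0]$ claimed in the statement: one introduces the value
\[
v(d) := \inf_{w \in \mathcal{T}^2}\ \big(D^2\psi(z_0)d^2 + \nabla\psi(z_0)w\big)
\]
over the relevant second-order tangent set $\mathcal{T}^2$ to $K_1$ at $G_1(z_0)$ in direction $DG_1(z_0)d$, shows $v(d)\geq 0$, and applies a duality/minimax theorem (again from \cite{Bonnans-2000}, Chapter 3) to rewrite $v(d)$ as $\sup_{(e^*,w^*)\in\Lambda_1[z_0]}\big(D^2_z\mathcal{L}(z_0,e^*,w^*)[d,d] - \sigma(w^*,\mathcal{T}^2)\big)$. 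Since $K$ is convex with nonempty interior, the relevant curvature (sigma) term of the convex set $K$ vanishes, and the supremum being nonnegative gives the existence of the desired $(e^*,w^*)$; the weak-star compactness of $\Lambda_1[z_0]$ from Proposition \ref{Pro-FirstOptim} ensures the supremum is attained.

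The main obstacle is the minimax exchange in the last step: $\Lambda_1[z_0]$ depends on $z_0$ but not on $d$, whereas the primal second-order bound is $d$-dependent, so one needs the multiplier realizing nonnegativity to be chosen \emph{after} $d$ is fixed — which is exactly why the statement only asserts "for each $d$, there exists $(e^*,w^*)$" rather than a uniform multiplier. Making the minimax rigorous requires that $\Lambda_1[z_0]$ be weak-star compact and convex (both hold: convexity is immediate from the defining linear relations, compactness is Proposition \ref{Pro-FirstOptim}) and that the function $(e^*,w^*)\mapsto D^2_z\mathcal{L}(z_0,e^*,w^*)[d,d]$ be weak-star upper semicontinuous and concave (it is affine). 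A secondary technical point is justifying that the second-order tangent set to $K_1=\{0\}\times K$ is nonempty in the critical directions and that its support function behaves as expected; here the nonempty interior of $K$ (assumption $(A_2)$) together with $(a_3)$ does the work, since a convex set with nonempty interior is "second-order regular" in the directions that matter. I would therefore organize the write-up as: (1) metric regularity and construction of the feasible arc; (2) Taylor expansion, vanishing of the first-order term, extraction of the primal second-order inequality; (3) the duality argument converting it to the Lagrangian form, invoking convexity of $K$ to drop the curvature term and Proposition \ref{Pro-FirstOptim} for compactness of $\Lambda_1[z_0]$.
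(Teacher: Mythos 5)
Your route is viable but genuinely different from the paper's. The paper disposes of this proposition in one line by citing Theorem 3.2 of \cite{Kien-2022} with $m=1$; the underlying argument there (visible in the commented-out block of the source) is a Dubovitskii--Milyutin/Ioffe--Tikhomirov style separation: for a fixed $d\in\mathcal{C}_0[z_0]$ one builds a single open convex set $C\subset\mathbb{R}\times E\times W$ encoding both the first-order data and the second-order terms $\tfrac12 D^2\psi(z_0)d^2$, $\tfrac12 D^2F(z_0)d^2$, $\tfrac12 D^2G(z_0)d^2$, shows $(0,0,0)\notin C$, and separates; the multiplier and the inequality $D^2_z\mathcal{L}\ge 0$ drop out of the same separating functional, with Robinson's CQ used only to normalize $\lambda=1$. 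You instead follow the Bonnans--Shapiro primal--dual scheme: feasible second-order arc via metric regularity, Taylor expansion, then conjugate duality over the second-order tangent set. That buys a more geometric picture and connects to the general theory of Chapter 3 of \cite{Bonnans-2000}, at the cost of machinery (second-order tangent sets, absence of a duality gap) that the separation proof avoids entirely. Two points in your write-up need tightening if you execute it. First, in a general Banach space $Z$ the bounded correctors $w(t)$ need not have a convergent (even weakly convergent) subsequence, so you cannot speak of ``the limiting corrector $w^*=\lim w(t)$''; the standard fix is to work with $\liminf_{t\to0^+}\bigl(\nabla\psi(z_0)w(t)+D^2\psi(z_0)d^2\bigr)\ge 0$ and bound it below by the infimum defining $v(d)$, which is exactly how \cite{Bonnans-2000} proceeds. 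Second, the curvature term $\sigma\bigl(w^*,\mathcal{T}^2\bigr)$ does not ``vanish'' merely because $K$ is convex with nonempty interior; what saves you is that condition $(a_3)$ puts $DG(z_0)d$ in the \emph{radial} cone $\mathrm{cone}(K-G(z_0))$, so $0$ belongs to the second-order tangent set, hence $\sigma(w^*,\mathcal{T}^2)\ge 0$ and the term can be dropped from the lower bound $\sup_{\Lambda_1}\bigl(D^2_z\mathcal{L}-\sigma\bigr)\ge 0$. This is precisely why the proposition is stated for $\mathcal{C}_0[z_0]$ rather than its closure $\mathcal{C}[z_0]$, and your argument should make that dependence explicit.
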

\begin{proof} The proof follows from Theorem 3.2 in \cite{Kien-2022} with $m=1$ and $\Lambda_1[z_0]$ is a weakly star compact set.

\end{proof}

\subsection{ State equation and linearized equation}

Hereafter $H:=L^2(\Omega)$, $V:=H_0^1(\Omega)$ and $D= H^2(\Omega)\cap H_0^1(\Omega)$. The norm and the scalar product in $H$ are denoted by $|\cdot|$ and $(\cdot, \cdot)_H$,  respectively.   It is known that the embeddings 
$$
D\hookrightarrow V\hookrightarrow H
$$ are compact and each space is dense in the following one. 

Given $0<\alpha\leq 1$, we denote by $C^{0, \alpha}(\bar \Omega)$ and $C^{0, \alpha}(\bar Q)$ the space of  H\"{o}lder continuous functions of order $\alpha$,  on $\bar \Omega$ and $\bar Q$, respectively. 

Let $H^{-1}(\Omega)$ be the dual of $H_0^1(\Omega)$. We define the following function spaces
\begin{align*}
& H^1(Q)=W^{1,1}_2(Q)=\{y\in L^2(Q):  \frac{\partial y}{\partial x_i}, \frac{\partial y}{\partial t}\in L^2(Q)\},\\
&V_2(Q)=L^\infty(0, T; H)\cap L^2(0, T; V),\\
&W(0, T)=\{y\in L^2(0, T; H^1_0(\Omega)): y_t\in L^2(0, T; H^{-1}(\Omega))\},\\
& W^{1,1}_2(0, T; V, H)=\{y\in L^2([0, T], V): \frac{\partial y}{\partial t}\in L^2([0, T], H) \},\\
& W^{1,1}_2(0, T; D, H)=\{y\in L^2([0, T], D): \frac{\partial y}{\partial t}\in L^2([0, T], H) \},\\
 &U=L^\infty(Q),\ K_\infty=\{u\in U: u(x, t)\leq 0\ {\rm a.a.}\ (x, t)\in Q\},\\
 &Y = \bigg\{ y \in W^{1,1}_2(0, T; D, H) \cap L^\infty(Q): \quad \frac{\partial y}{\partial t} + Ay \in L^p(0, T; H) \bigg\},
\end{align*} where
\begin{align} \label{Dimension}
    & \frac{4}{4-N}<p<4 \quad \text{whenever}\quad N=2,\\
    & \frac{4}{4-N}<p< \frac{2N}{N-2} \quad \text{whenever}\quad N=3.
\end{align} $Y$ is endowed with the graph norm
\begin{align*}
    \|y\|_Y:= \|y\|_{W^{1, 1}_2(0, T; D, H)} + \|y\|_{L^\infty(Q)} +  \bigg\|\frac{\partial y}{\partial t} + Ay\bigg\|_{L^p(0, T; H)}.
\end{align*}
Under the graph norm $Y$ is a Banach space. Besides, we have the following embedding: 
\begin{align}\label{keyEmbed1}
&Y \hookrightarrow  W^{1,1}_2(0, T; D, H)\hookrightarrow  C(0, T; V),\\
& W(0, T) \hookrightarrow C([0, T], H),
\end{align} where $C([0, T], X)$ stands for the space of continuous mappings $v: [0, 1]\to X$ with $X$ is a Banach space. Moreover, for all $v, w\in W(0, T)$, we have  the following integration by part formula (see Proposition 23.23, p.422 in \cite{Zeidler}) :
\begin{align}\label{IntegralByPart}
(v(t_1), w(t_1))_H -(v(0), w(0))_H =\int_0^{t_1}\langle v_t, w\rangle dt +\int_0^{t_1}\langle w_t, v\rangle dt\quad \text{with}\quad t_1\in [0, T].
\end{align} 
Recall that given  $y_0\in H$ and $u\in L^2(0, T; H)$,  a function $y\in W(0, T)$ is said to be a weak solution of the semilinear parabolic equation \eqref{P2}-\eqref{P3} if 
\begin{align}\label{GeneralSol}
&\int_{\Omega} y(x, t_1)\eta(x, t_1) dx -\int_0^{t_1}\int_\Omega y(x, t)\eta_t(x, t)dxdt +\sum_{i,j=1}^N \int_0^{t_1}\int_\Omega a_{ij} D_i y(x,t)D_j \eta(x, t)dxdt \notag\\
&+\int_0^{t_1}\int_\Omega f(y(x,t))\eta(x,t)dxdt=\int_{\Omega} y_0(x)\eta(x, 0) dx +\int_0^{t_1}u(x, t)\eta(x,t)dxdt
\end{align} for all test functions $\eta\in W^{1,1}_2(0, T; V, H)$ and $t_1\in [0, T]$.   By  standard arguments and formular \eqref{IntegralByPart}, we can show that if $f(y)\in L^2(0, T; H)$ whenever $y\in L^2(0, T; V)$ then $y$ is a weak solution of equations  \eqref{P2}-\eqref{P3} if and only if
\begin{align}
\label{WeakSol}
\begin{cases}
\langle y_t, v\rangle + \sum_{i,j=1}^n \displaystyle \int_\Omega a_{ij}D_iy D_j v dx +(f(y), v)_H =(u, v)_H\quad \forall v\in H_0^1(\Omega)\quad \text{a.a.}\quad t\in [0, T]\\
y(0)=y_0.
\end{cases}
\end{align} 
If a  weak solution $y$ such that $y\in W^{1,1}_2(0, T; D, H)$ and $f(y)\in L^2(0, T; H)$ then we have $y_t +Ay +f(y)\in L^2(0, T; H)$ and
$$
\langle y_t, v\rangle + (Ay, v) +(f(y), v) =(u, v)\quad \forall v\in H_0^1(\Omega).
$$ Since $H_0^1(\Omega)$ is dense in $H=L^2(\Omega)$, we have  
$$
\langle y_t, v\rangle + (Ay, v) +(f(y), v) =(u, v)\quad \forall v\in H.
$$  Hence
$$
y_t + Ay + f(y) =u\quad \text{a.a.}\ t\in [0, T],\ y(0)=y_0.
$$ In this case we say $y$ is a {\it strong solution} of \eqref{P2}-\eqref{P3}. From now on a  solution to \eqref{P2}-\eqref{P3} is understood a strong solution. 

Let us make the following assumptions which are related to the state equation. 

\noindent $(H1)$ Coefficients ${a_{ij}}=a_{ji} \in {L^\infty }\left( \Omega  \right)$ for every $1 \le i,j \le N$, satisfy the uniform ellipticity conditions, i.e.,  there exists a number $\alpha > 0$ such that 
\begin{align}
\alpha {\left| \xi  \right|^2} \le \sum\limits_{i,j = 1}^N {{a_{ij}}\left( x \right){\xi _i}{\xi _j}} \,\,\,\,\,\,\,\,\forall \xi  \in {{\mathbb R}^N}\,\,\,{\rm {for\,\,\,a.a.}}\,\,\,x \in \Omega .
\end{align}

\noindent $(H2)$ The mapping $f:{\mathbb R} \to {\mathbb R}$ is of class $C^2$ such that $f(0)=0$ and $f'(y)\geq 0$. Furthermore,  for each $M>0$, there exists $k_M>0$ such that 
\begin{align}\label{Phi-LocalLip}
&|f'(y)| \le k_M, \\
&|f(y_1)-f(y_2)|+ |f'(y_1)-f'(y_2)|+|f''(y_1)-f''(y_2)|\leq k_M|y_1-y_2|
\end{align} for all $y, y_1, y_2$ satisfying $ |y|, |y_1|, |y_2|\leq M$.

\begin{lemma}\label{Lemma-stateEq} Suppose that $(H1)$ and $(H2)$ are satisfied and $y_0\in L^\infty(\Omega)\cap H_0^1(\Omega)$. Then for each $u\in L^p(0, T; H)$ with $p>\frac{4}{4-N}$, the state equation \eqref{P2}-\eqref{P3} has a unique  solution   $y\in Y$ and there exist positive constants $C_1>0$ and $C_2>0$ such that 
\begin{align}\label{KeyInq0}
    \|y\|_{L^\infty(Q)}\leq C_1 (\|u\|_{L^p([0, T], H)} + \|y_0\|_{L^\infty(\Omega)})
\end{align} and 
\begin{align}\label{KeyInq1}
    \| \frac{\partial y}{\partial t}\|_{L^2([0, T], H)} +\|y\|_{L^2([0, T], D)}\leq C_2\big(\|u\|_{L^2([0, T], H)} +\|y_0\|_{L^\infty(Q)} +\|y_0\|_V\big).
\end{align}  
\end{lemma}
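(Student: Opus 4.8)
The plan is to prove the four assertions in turn: uniqueness, existence (by truncation), the $L^\infty$-bound \eqref{KeyInq0}, and the maximal-regularity bound \eqref{KeyInq1}. The only genuinely delicate point is \eqref{KeyInq0}, where the restriction $p>\frac{4}{4-N}$ must be used in an essential way. \textbf{Uniqueness} is immediate: if $y_1,y_2\in Y$ both solve \eqref{P2}--\eqref{P3}, then $w:=y_1-y_2\in W(0,T)$ satisfies $w(0)=0$ and, both being strong solutions, $w_t+Aw+f(y_1)-f(y_2)=0$ in $L^2(0,T;H)$; pairing with $w(t)\in V$ and using \eqref{IntegralByPart} gives $\tfrac12\frac{d}{dt}|w(t)|^2+a(w,w)+\big(f(y_1)-f(y_2),\,y_1-y_2\big)=0$. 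By $(H1)$ the second term is $\ge\alpha|\nabla w|^2\ge0$ and by $(H2)$ the third is $\ge0$, so $t\mapsto|w(t)|^2$ is nonincreasing and vanishes at $t=0$; hence $w\equiv0$.

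\textbf{Existence.} For $M>0$ let $f_M$ be the bounded, globally Lipschitz, nondecreasing function coinciding with $f$ on $[-M,M]$ and equal to $f(\pm M)$ outside; by $(H2)$ it has Lipschitz constant $k_M$. The truncated equation is solved by standard means: either by the theory of evolution equations driven by monotone coercive operators (the map $v\mapsto Av+f_M(v):V\to V^*$ is monotone, hemicontinuous, bounded and, since $(f_M(v),v)\ge0$, coercive by Poincar\'e), or by a Banach fixed-point argument for the map sending $v\in L^2(0,T;H)$ to the solution of the linear problem $z_t+Az=u-f_M(v)$, $z(0)=y_0$, which contracts in a suitably weighted $L^2(0,T;H)$-norm by an energy estimate and Gronwall. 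Either way there is a unique $y_M\in W(0,T)$ solving the truncated equation; since $f_M(y_M)\in L^\infty(Q)\subset L^2(0,T;H)$ and $y_0\in V$, parabolic maximal $L^2$-regularity for the analytic semigroup generated by $-A$ on $H$ upgrades $y_M$ to a strong solution in $W^{1,1}_2(0,T;D,H)$.

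\textbf{The $L^\infty$-bound \eqref{KeyInq0}, uniform in $M$, by comparison.} Since $|y_0|\in H_0^1(\Omega)\cap L^\infty(\Omega)$, let $\bar y$ solve the linear problem $\bar y_t+A\bar y=|u|$, $\bar y(0)=|y_0|$. The weak maximum principle gives $\bar y\ge0$, hence $f_M(\bar y)\ge0$, so $\bar y-y_M$ solves $(\bar y-y_M)_t+A(\bar y-y_M)+c(x,t)(\bar y-y_M)=|u|-u+f_M(\bar y)\ge0$ with $(\bar y-y_M)(0)=|y_0|-y_0\ge0$, where $c:=\big(f_M(\bar y)-f_M(y_M)\big)/(\bar y-y_M)\in[0,k_M]$; the weak maximum principle then yields $y_M\le\bar y$, and symmetrically $-\bar y\le y_M$. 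Writing $\bar y(t)=S(t)|y_0|+\int_0^tS(t-s)|u(s)|\,ds$ with $S(\cdot)$ the semigroup generated by $-A$ on $H$, using that $S(\cdot)$ is an $L^\infty$-contraction together with the smoothing estimate $\|S(t)\|_{\mathcal{L}(H,L^\infty(\Omega))}\le C\,t^{-N/4}$ (valid as $N\le3<4$) and H\"older in time, one gets $\|\bar y\|_{L^\infty(Q)}\le\|y_0\|_{L^\infty(\Omega)}+C\big\|\,s^{-N/4}\big\|_{L^{p'}(0,T)}\|u\|_{L^p(0,T;H)}$, and $s\mapsto s^{-N/4}$ lies in $L^{p'}(0,T)$ exactly when $\tfrac{Np'}{4}<1$, i.e. $p>\frac{4}{4-N}$. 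Hence $\|y_M\|_{L^\infty(Q)}\le\|y_0\|_{L^\infty(\Omega)}+C\|u\|_{L^p(0,T;H)}$ with $C=C(\Omega,N,p,T)$ independent of $M$. Taking $M:=\|y_0\|_{L^\infty(\Omega)}+C\|u\|_{L^p(0,T;H)}+1$ forces $\|y_M\|_{L^\infty(Q)}<M$, so $f_M(y_M)=f(y_M)$ and $y:=y_M$ solves the original equation and satisfies \eqref{KeyInq0}; since $f(y)\in L^\infty(Q)\subset L^p(0,T;H)$ we get $y_t+Ay=u-f(y)\in L^p(0,T;H)$, so $y\in Y$.

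\textbf{The bound \eqref{KeyInq1}.} Testing the strong equation with $y_t$ and using $(Ay,y_t)=\tfrac12\frac{d}{dt}a(y,y)$, $(f(y),y_t)=\frac{d}{dt}\int_\Omega F(y)\,dx$ with $F(s)=\int_0^sf(\tau)\,d\tau\ge0$, and Young's inequality, bounds $\int_0^T|y_t|^2\,dt+\sup_t a(y(t),y(t))$ by $C\big(\|y_0\|_V^2+\|u\|_{L^2(0,T;H)}^2+\int_\Omega F(y_0)\,dx\big)$, the last term being controlled by $\|y_0\|_{L^\infty(\Omega)}$ via $(H2)$. From $Ay=u-f(y)-y_t$ and elliptic regularity, $\|y\|_{L^2(0,T;D)}\le C\|Ay\|_{L^2(0,T;H)}\le C\big(\|u\|_{L^2(0,T;H)}+\|f(y)\|_{L^2(0,T;H)}+\|y_t\|_{L^2(0,T;H)}\big)$, where $\|f(y)\|_{L^2(0,T;H)}\le k_M\|y\|_{L^2(0,T;H)}$ is controlled by the basic energy estimate (test with $y$, use $(f(y),y)\ge0$). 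This yields \eqref{KeyInq1}; the constant $C_2$ depends on $\|y\|_{L^\infty(Q)}$, hence on $\|u\|_{L^p(0,T;H)}$ and $\|y_0\|_{L^\infty(\Omega)}$ through \eqref{KeyInq0}. The main obstacle is the comparison-plus-smoothing step producing \eqref{KeyInq0} with constant independent of $M$ and sharp in $p$; everything else is standard linear parabolic theory combined with the monotonicity of $f$.
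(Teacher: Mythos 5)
Your proof is correct, but it follows a genuinely different route from the paper's. The paper disposes of existence, uniqueness, the $L^\infty$-bound \eqref{KeyInq0} and the energy bound \eqref{KeyInq1} in one stroke by citing \cite[Theorem 2.1]{Casas-2023}, and then devotes its own effort only to upgrading the solution from $W^{1,1}_2(0,T;V,H)$ to $W^{1,1}_2(0,T;D,H)$: it solves the auxiliary linear problem $z_t+Az=u-f(y)$, $z(0)=y_0$ (which has $D$-regularity by \cite[Theorem 5, p.~360]{Evan} since $f(y)\in L^\infty(Q)$), and identifies $z=y$ by an energy argument. You instead reconstruct the whole statement from scratch: uniqueness by monotonicity of $f$, existence by truncation $f_M$ plus a fixed point, the uniform-in-$M$ sup-norm bound by comparison with $\bar y_t+A\bar y=|u|$, $\bar y(0)=|y_0|$, and ultracontractivity $\|S(t)\|_{\mathcal{L}(H,L^\infty)}\le Ct^{-N/4}$, which is precisely where the threshold $p>\frac{4}{4-N}$ enters (via $s^{-N/4}\in L^{p'}(0,T)$) — this is essentially the mechanism hidden inside the cited reference. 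Your treatment of the $D$-regularity (testing with $y_t$ and elliptic regularity applied to $Ay=u-f(y)-y_t$) is an equally valid alternative to the paper's auxiliary-equation trick, and has the advantage of producing \eqref{KeyInq1} explicitly rather than inheriting it from the citation. Two minor remarks: your $C_2$ depends on $k_M$, hence on $\|u\|_{L^p(0,T;H)}$ and $\|y_0\|_{L^\infty(\Omega)}$ through \eqref{KeyInq0}, which is consistent with how the lemma is used but worth stating; and both your argument and the paper's implicitly need more than $a_{ij}\in L^\infty(\Omega)$ (e.g.\ Lipschitz coefficients) for the $H^2$-regularity underlying the space $D$ — a gap in the hypotheses shared with the paper, not introduced by you.
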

\begin{proof} By \cite[Theorem 2.1]{Casas-2023} and \cite[III, Lemma 2.2]{Casas-2023}, for each $u\in L^p(0, T; H)$ with $p>\frac{4}{4-N}$, the state equation has a unique solution $y\in L^\infty(Q)\cap W^{1, 1}_2(0, T; V, H)$ such that $f(y)\in L^2(0, T; H)$ and inequalities \eqref{KeyInq0} and \eqref{KeyInq1} are fulfilled.  By $(H2)$, we have $f(y)\in L^\infty(Q)$. Let us claim that $y\in W^{1,1}_2(0, T; D, H)$. In fact, consider the linear equation 
$$
z_t + Az = u-f(y),\  z(0)=y_0.
$$ By \cite[Theorem 5, p. 360]{Evan}, this equation has a unique solution $z\in W^{1, 1}_2(0, T; D, H)\cap L^\infty(0, T; V)$. Since
$$
y_t+A y=u-f(y),\ y(0)=y_0,
$$ we get 
$$
(z_t-y_t) +A(z-y)=0,\ (z-y)(0)=0.
$$ Taking the scalar product in $H$ with $z-y$ and using $(H1)$, we have
$$
\frac{1}2\frac{d}{dt}|z-y|^2 +\alpha |\nabla (z-y)|^2 \leq 0.
$$ Hence $\frac{1}2\frac{d}{dt}|z-y|^2 \leq 0$.  Integrating on $[0, t]$ with $t\in [0, T]$, we get 
$$
|z(\cdot, t)-y(\cdot, t)|^2=0.
$$ This implies that $y=z\in W^{1,1}_2(0, T; D, H)$ and so $y\in Y$. 
\end{proof}

Let us consider the linearized equation 
\begin{align}\label{LinearizedEq1}
   y_t + A y + c(x,t)y =u,\quad y(0)=y_0. 
\end{align} 

\begin{lemma}\label{Lemma-LinearizedEq} Suppose that $u\in L^p(0, T;  H)$ with $p>\frac{4}{4-N}$,  $y_0\in L^\infty(\Omega)\cap H^1_0(\Omega)$ and $c\in L^\infty (Q)$. Then equation \eqref{LinearizedEq1} has a unique solution $y\in Y$.
\end{lemma}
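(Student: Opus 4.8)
The plan is to follow the pattern of the proof of Lemma~\ref{Lemma-stateEq}, the only genuinely new feature being the space–time dependent zeroth-order coefficient $c$, which I would first arrange to have a favourable sign. Perform the rescaling $y=e^{\lambda t}w$ with $\lambda:=\|c\|_{L^\infty(Q)}$; then $w$ solves $w_t+Aw+\tilde c\,w=\tilde u$, $w(0)=y_0$, where $\tilde c:=c+\lambda\geq 0$ and $\tilde u:=e^{-\lambda t}u\in L^p(0,T;H)$. Conversely, any solution $w\in Y$ of this problem yields a solution $y=e^{\lambda t}w\in Y$ of \eqref{LinearizedEq1}, because multiplication by the smooth scalar factor $e^{\lambda t}$ (bounded, with bounded derivative on $[0,T]$) preserves each of the norms defining $Y$ and $y_t+Ay=e^{\lambda t}(w_t+Aw)+\lambda y\in L^p(0,T;H)$. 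So it suffices to treat the case $c\geq 0$.

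Next, existence and uniqueness of a weak solution $w\in W(0,T)$ is standard linear parabolic theory: the time-dependent bilinear form $a(t;\varphi,v)=\sum_{i,j}\int_\Omega a_{ij}D_i\varphi\,D_j v\,dx+\int_\Omega c(x,t)\varphi v\,dx$ is bounded on $V\times V$ uniformly in $t$ and, since $c\geq 0$, coercive: $a(t;\varphi,\varphi)\geq\alpha\|\nabla\varphi\|_H^2$ by $(H1)$. Since $p>\frac{4}{4-N}\geq 2$ we have $\tilde u\in L^2(0,T;H)\hookrightarrow L^2(0,T;H^{-1}(\Omega))$ and $y_0\in H$, so the Lions–Galerkin theory (see e.g. \cite{Evan, Zeidler}) furnishes a unique $w\in W(0,T)$.

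The main point is to upgrade $w$ to $L^\infty(Q)$, and this is exactly where the restriction $p>\frac{4}{4-N}$ is used: it is the Aronson–Serrin/De~Giorgi–Nash–Moser condition $\frac1p+\frac N4<1$ for boundedness of weak solutions of parabolic equations with right-hand side in $L^p(0,T;L^2(\Omega))$. Having reduced to $\tilde c\geq 0$, a Stampacchia truncation argument tested against $(w-k)^+$ (in which the zeroth-order term now has the right sign and is simply discarded), or equivalently the regularity results of \cite{Casas-2023} invoked in the proof of Lemma~\ref{Lemma-stateEq}, which cover linear equations with a bounded potential, yields $w\in L^\infty(Q)$ together with an estimate $\|w\|_{L^\infty(Q)}\leq C(\|\tilde u\|_{L^p(0,T;H)}+\|y_0\|_{L^\infty(\Omega)})$.

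Finally I would bootstrap as in Lemma~\ref{Lemma-stateEq}. Writing $w_t+Aw=\tilde u-\tilde c\,w=:g$, we have $\tilde c\,w\in L^\infty(Q)$, hence $g\in L^2(0,T;H)$; by \cite[Theorem 5, p. 360]{Evan} the problem $z_t+Az=g$, $z(0)=y_0\in V$, has a unique solution $z\in W^{1,1}_2(0,T;D,H)\cap L^\infty(0,T;V)$, and testing the equation for $z-w$ with $z-w$ and using $(H1)$ gives $\frac12\frac{d}{dt}|z-w|^2+\alpha\|\nabla(z-w)\|_H^2\leq 0$, so $z=w$ and $w\in W^{1,1}_2(0,T;D,H)$. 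Moreover $L^\infty(Q)\hookrightarrow L^p(0,T;H)$, whence $g=\tilde u-\tilde c\,w\in L^p(0,T;H)$ and therefore $w\in Y$; undoing the rescaling produces a solution $y\in Y$ of \eqref{LinearizedEq1}. Uniqueness for \eqref{LinearizedEq1} follows from a Gronwall estimate: the difference $w$ of two solutions solves $w_t+Aw+cw=0$, $w(0)=0$, so $\frac12\frac{d}{dt}|w|^2+\alpha\|\nabla w\|_H^2\leq\|c\|_{L^\infty(Q)}|w|^2$ and hence $w\equiv 0$. The only real obstacle is the $L^\infty$ bound of the third step; everything else is a routine adaptation of the previous lemma.
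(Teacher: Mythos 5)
Your proof is correct, but it is organized differently from the paper's. You first normalize the sign of the zeroth-order coefficient by the exponential substitution $y=e^{\lambda t}w$, $\lambda=\|c\|_{L^\infty(Q)}$, and then run the full variable-coefficient machinery for $\tilde c=c+\lambda\ge 0$: Lions--Galerkin for the weak solution, a Stampacchia/De Giorgi truncation (correctly keyed to the condition $\tfrac1p+\tfrac N4<1$, i.e.\ $p>\tfrac{4}{4-N}$) for the $L^\infty(Q)$ bound, and a bootstrap through \cite[Theorem 5, p.~360]{Evan} for the $W^{1,1}_2(0,T;D,H)$ regularity. The paper instead never proves an $L^\infty$ estimate for a variable coefficient: it first solves \eqref{LinearizedEq1} directly by \cite[Theorem 5, Chapter 7]{Evan}, obtaining $y\in W^{1,1}_2(0,T;D,H)\cap L^\infty(0,T;H^1_0(\Omega))$ and hence $cy\in L^p(0,T;H)$; it then introduces the auxiliary problem $z_t+Az+c_0z=u+(c_0-c)y$, $z(0)=y_0$, with the \emph{constant} $c_0=\|c\|_{L^\infty(Q)}$, so that $f(z)=c_0z$ satisfies $(H2)$ and Lemma~\ref{Lemma-stateEq} applies as a black box to give $z\in Y$; finally an energy estimate on $z-y$ identifies $z=y$. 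The paper's route is more economical in that it reuses Lemma~\ref{Lemma-stateEq} and Evans verbatim and needs no separate boundedness theorem for nonconstant coefficients; your route is more self-contained and slightly more general (it directly yields the quantitative bound $\|w\|_{L^\infty(Q)}\le C(\|\tilde u\|_{L^p(0,T;H)}+\|y_0\|_{L^\infty(\Omega)})$ for the variable-coefficient equation), at the cost of having to carry out, or carefully cite, the truncation argument. Both your rescaling step and your final identification/uniqueness arguments are sound, so the only place requiring care if you wrote this out in full is the $L^\infty$ estimate, exactly as you note.
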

\begin{proof} Since  $c\in L^\infty(Q)$, there exists a constant $c_0>0$ such that $|c(x,t)|\leq c_0$ for a.a. $(x, t)\in Q$.  By Theorem 5 in \cite[Chapter 7]{Evan}, equation \eqref{LinearizedEq1} has a unique solution $y\in W^{1,1}_2(0, T; D, H)\cap L^\infty(0, T; H_0^1(\Omega))$. It follows that $c(x,t) y\in L^p(0, T; H)$. Let us consider the equation
\begin{align}\label{LinearizeEq2}
z_t + Az + c_0 z = u + (c_0-c)y,\quad z(0)=y_0. 
\end{align} By \cite[Theorem 5, Chapter 7]{Evan} and Lemma \ref{Lemma-stateEq}, the equation has a unique solution $z\in  Y$.  From \eqref{LinearizedEq1} and \eqref{LinearizeEq2}, we have
\begin{align}
z_t-y_t +A (z-y) + c_0(z-y)=0,\ (z-y)(0)=0. 
\end{align} Taking the scalar product both side with $z-y$ and using $(H1)$, we get
$$
\frac{1}2\frac{d}{dt}|z-y|^2 + \alpha |\nabla (z-y)|^2 + c_0|z-y|^2 \leq 0. 
$$ Integrating on $[0, T]$, we obtain
$$
\sup_{t\in[0, T]}|z-y|^2  + \alpha \|z-y\|^2_{L^2(0, T; D)} + c_0\|z-y\|^2_{L^2(0, T; H)}\leq 0.
$$ This implies that $z=y$ a.a. in $Q$. The conclusion is followed. 
\end{proof}

\section{Existence of regular multipliers and second-order optimality conditions}

Let $\phi: \Omega\times[0, T]\times\mathbb{R}\times\mathbb{R}\to\mathbb{R}$ be a mapping which stands for $L$ and $g$. We impose the following hypotheses. 

\noindent $(H3)$ $\phi$ is a Carath\'{e}odory  function and  for each $(x,t)\in \Omega\times [0, T]$, $\phi(x, t, \cdot, \cdot)$ is of class $C^2$ and satisfies the following property:  for each $M>0$, there exists $l_{\phi,M}>0$ such that 
\begin{align*}
    & |\phi(x, t, y_1, u_1)-\phi(x, t, y_2, u_2)|+|\phi_y(x, t, y_1, u_1)-\phi_y(x, t, y_2, u_2)|\\
    &\quad \quad +|\phi_u(x, t, y_1, u_1)-\phi_u(x, t, y_2, u_2)|+|\phi_{yy}(x, t, y_1, u_1)-\phi_{yy}(x, t, y_2, u_2)|\\
    &\quad \quad +|\phi_{yu}(x, t, y_1, u_1)-\phi_{yu}(x, t, y_2, u_2)|+ |\phi_{uu}(x, t, y_1, u_1)-\phi_{uu}(x, t, y_2, u_2)|\\
    &\quad \quad \leq l_{\phi,M}(|y_1-y_2|+ |u_1-u_2|)
\end{align*}
for all $(x, t, y_i, u_i)\in \Omega\times[0, T]\times \mathbb{R}\times\mathbb{R}$ satisfying $|y_i|, |u_i|\leq M$ with $i=1,2$. Furthermore, we require that the functions $\phi_y(\cdot, \cdot, 0, 0), \phi_u(\cdot, \cdot, 0, 0), \phi_{yy}(\cdot, \cdot, 0, 0), \phi_{yu}(\cdot, \cdot, 0, 0)$ and $\phi_{uu}(\cdot, \cdot, 0, 0)$ belong to $L^\infty(Q)$.

\noindent $(H4)$  $f_y[x, t] \geq -\frac{1}{\epsilon} g_y[x, t]\geq 0$ for a.a. $(x, t)\in Q$ and there exists $\gamma>0$ such that  
    \begin{align}
     a -\bar u(x, t) + \epsilon\bar u(x,t) + g[x, t]  \leq - \gamma \quad {\rm a.a.} \  (x, t) \in Q.
    \end{align}
 Hereafter, given $\bar z=(\bar y, \bar u)\in\Phi$,  the symbol $\phi[x,t]$ stands for $\phi(x, t, \bar y(x, t), \bar u(x, t))$. 

Note that hypothesis $(H3)$ makes sure that $J$ and $g$ are of class $C^2$ on $Y\times U$. Meanwhile, $(H4)$ guarantees that the Robinson constraint qualification is satisfied and the Lagrange multipliers belong to $L^1(\Omega)$.   Let us give an example under which $(H4)$ is satisfied. 

\begin{example}{\rm Let $\epsilon=1$, $a=0$, $f(x,t,y)=y^3 + y$,  $g(x, t, y)= -y -\gamma$ and $y_0(x)\geq 0$  for a.a. $x\in \Omega$. Then we have $f_y(x,t, y)=3y^2 +1$ and $g_y(x, t, y)=-1$. Hence $f_y(x,t, y)\geq - g_y(x, t, y)\geq 0$.  Let $u\in [0, b]$. Then $0\leq u\in L^\infty(Q)$. The maximum principle implies that the solution $y$ of the state equation corresponding to $u$ satisfies the property that $y(x, t)\geq 0$ for a.a. $(x, t)\in Q$. It follows that 
\begin{align}
a- u(x, t) + g(x, t, y) +  u(x, t)  \leq g(x, t, y)=-y(x, t)-\gamma \leq -\gamma. 
\end{align}
 }
\end{example}

Let us define Banach spaces
\begin{align*}
&Z = Y \times U,\ W = W_1 \times W_2,\quad E = L^\infty(Q), \\
&W_1 = L^p(0, T; H),\  W_2 =  L^\infty(\Omega) \cap H_0^1(\Omega) \quad {\rm with} \quad    p > \frac{4}{4 - N} 
\end{align*} and 
\begin{align}
    &K_\infty=\{v\in L^\infty (Q): v(x, t)\leq 0\ {\rm a.a.}\ (x, t)\in Q\}\\
    &K= K_\infty\times K_\infty \times K_\infty.\label{K-set}
\end{align}
Define mappings $F: Y\times U\to W$ and  $G: Y\times U\to E \times E \times E$ by setting
\begin{align}\label{F-mapping}
     F(y, u) = (F_1(y, u), \  F_2(y, u)) = \bigg(\frac{\partial y}{\partial t} + Ay + f(y) - u,\ \   y(0) - y_0 \bigg)
\end{align} and 
\begin{align}\label{G-mapping}   
     G(y, u) = (G_1(y, u), G_2(y, u), G_3(y, u)),
\end{align}
where
\begin{align}
    &G_1(y, u) = u - b, \label{hamG1} \\
    &G_2(y, u) = - u  + a \label{hamG2} \\
    &G_3(y, u) = \epsilon u + g(y). \label{hamG3}
\end{align} 
By definition of space $Y$, if $(y, u) \in Y\times U$ then $\frac{\partial y}{\partial t} + Ay \in L^p(0, T; H),$ $f(y) \in L^\infty(Q) \subset L^p(0, T; H)$ (since $y \in L^\infty(Q)$) and $u \in L^\infty(Q) \subset L^p(0, T; H)$. Hence  
$$
\frac{\partial y}{\partial t} + Ay + f(y) - u \in L^p(0, T; H) = W_1
$$ and $F_1$ is well defined.  Also, since $y \in W^{1,1}_2(0, T; D, H) \cap L^\infty(Q)$, we have $y \in L^2(0, T; H^2 \cap H_0^1(\Omega)) \cap L^\infty(0, T; L^\infty(\Omega))$. Hence  $y(t) \in (H^2 \cap H_0^1(\Omega))\cap L^\infty(\Omega)$ for all $t\in [0, T]$. Consequently, $F_2$ is well defined and so is $F$. 

Then  problem \eqref{P1}-\eqref{P5} can be formulated in the  form
\begin{align}
    &J(y,u) \to {\rm min}, \label{P'1}\\
    &{\rm s.t.} \nonumber \\
    &F(y,u) = 0,\label{P'2}\\
    &G(y,u)\in K.\label{P'3}
\end{align}
Therefore, we can apply Proposition \ref{Pro-SecondOptim}  for the problem \eqref{P1}-\eqref{P5} to derive necessary optimality conditions. 

Let ${\mathcal C}_0 [\bar z]$ be a set of all $z = (y, u) \in Y\times U$ such that the following conditions hold: 
\begin{itemize}
\item [$(c_1)$] $\displaystyle \int_Q (L_y[x, t]y(x, t) + L_u[x, t]u(x, t))dxdt  \le 0$; 

\item [$(c_2)$] $\dfrac{\partial y}{\partial t} + Ay + f'(\bar y)y = u, y(0) = 0;$ 

\item[$(c_3)$] $(u, -u, g_y[\cdot, \cdot]y + \epsilon u) \in {\rm cone}\big[\big(K_\infty -(\bar u-b)\big)\times\big(K_\infty -(a-\bar u)\big)\times\big(K_\infty-(g[\cdot, \cdot]+\epsilon \bar u\big)\big].$
\end{itemize}
We denote by ${\mathcal C}[\bar z]$ the closure of  ${\mathcal C}_0[\bar z]$  in $Y\times U$. The set ${\mathcal C}[\bar z]$ is called a critical cone to problem \eqref{P1}-\eqref{P5} at $\bar z$. Each vector $(y, u)\in\mathcal{C}[\bar z]$ is called a critical direction. 

Firstly, we have

\begin{lemma}\label{Lemma-DF-surjective} For each $\bar z\in\Phi$, the operator $DF(\bar z)$ is surjective.
\end{lemma}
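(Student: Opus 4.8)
First I would write down the linearization explicitly. Since $F_1(y,u)=\partial_t y+Ay+f(y)-u$ and $F_2(y,u)=y(0)-y_0$, one has, for $\bar z=(\bar y,\bar u)\in\Phi$,
\[
DF(\bar z)(y,u)=\big(\partial_t y+Ay+f'(\bar y)y-u,\ y(0)\big),\qquad (y,u)\in Z=Y\times U .
\]
Because $\bar y\in Y\subset L^\infty(Q)$, hypothesis $(H2)$ gives $|f'(\bar y(x,t))|\le k_M$ for a.a. $(x,t)\in Q$ with $M=\|\bar y\|_{L^\infty(Q)}$, so $c:=f'(\bar y)\in L^\infty(Q)$; hence $DF(\bar z)$ is a well-defined bounded linear operator from $Z$ into $W=W_1\times W_2=L^p(0,T;H)\times\big(L^\infty(\Omega)\cap H_0^1(\Omega)\big)$, exactly as $F$ itself was shown to be well defined.

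The plan is then to exploit the freedom in the control variable and solve the range equation with $u\equiv 0$. Given an arbitrary right-hand side $(\zeta,\eta)\in W_1\times W_2$, I would apply Lemma~\ref{Lemma-LinearizedEq} with coefficient $c=f'(\bar y)\in L^\infty(Q)$, source $\zeta\in L^p(0,T;H)$ (and $p>\frac{4}{4-N}$ by the definition of $W_1$), and initial datum $\eta\in L^\infty(\Omega)\cap H^1_0(\Omega)$: this produces a unique $y\in Y$ with
\[
y_t+Ay+f'(\bar y)y=\zeta,\qquad y(0)=\eta .
\]
Taking $u=0\in U$, the pair $(y,0)\in Y\times U=Z$ satisfies $DF(\bar z)(y,0)=(\zeta,\eta)$. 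Since $(\zeta,\eta)$ was arbitrary, $DF(\bar z)$ is surjective.

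There is no deep obstacle here; the only points requiring care are that the auxiliary coefficient $c=f'(\bar y)$ genuinely belongs to $L^\infty(Q)$ — which is precisely where the uniform bound $\|\bar y\|_{L^\infty(Q)}<\infty$ built into the definition of $Y$, together with $(H2)$, is used — and that the data spaces $W_1$ and $W_2$ appearing in the definition of $F$ coincide with the spaces for which Lemma~\ref{Lemma-LinearizedEq} yields a solution in $Y$; both match by construction. Thus surjectivity is witnessed, for every target $(\zeta,\eta)$, by the solution of the linearized state equation driven by $\zeta$ with zero control and initial value $\eta$.
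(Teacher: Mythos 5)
Your proposal is correct and follows essentially the same route as the paper: both reduce surjectivity of $DF(\bar z)$ to solving the linearized parabolic equation $y_t+Ay+f'(\bar y)y=\zeta$, $y(0)=\eta$ (with zero control), and both invoke Lemma~\ref{Lemma-LinearizedEq} with the $L^\infty$ coefficient $c=f'(\bar y)$ guaranteed by $(H2)$. The paper phrases this as bijectivity of the partial derivative $F_y(\bar z):Y\to W$, which is the same argument.
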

\begin{proof} It is sufficient to show that $F_y(\bar z)$ is bijective. In fact, under assumptions $(H1)$ and $(H2)$, the linear mapping $F_y(\bar z): Y \to W$, defined by 
$$ F_y(\bar z)y = (F_{1y}(\bar z)y, F_{2y}(\bar z)y) = \bigg(\dfrac{\partial y}{\partial t} + Ay + f'(\bar y)y, \quad y(0) \bigg).
$$ Taking  any $v = (u, y_0) \in W$, $u \in L^p(0, T; H)$ and $y_0 \in L^\infty(\Omega) \cap H_0^1(\Omega)$, we consider equation $F_y(\bar z)y = v$. This equation is equivalent to
$$
\dfrac{\partial y}{\partial t} + Ay + f'(\bar y)y = u,  \quad y(0) = y_0. 
$$ Since $u\in L^p(0, T; H)$ with  $p > \frac{4}{4 - N}$, Lemma \ref{Lemma-LinearizedEq} implies that  the above parabolic equation has a unique solution $y \in Y$. Thus $F_y(\bar z)$ is bijective, and so, $DF(\bar z)$ is surjective.
\end{proof}

\begin{lemma}
\label{Lemma-RobinsonCQ}
Suppose that $\bar z\in\Phi$ and $(H4)$ is satisfied.   Then the Robinson constraint qualification is fulfilled at $\bar z$.
\end{lemma}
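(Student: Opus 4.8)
I would verify the equivalent form \eqref{Robinson.2} of the Robinson constraint qualification. Part (i) is exactly Lemma~\ref{Lemma-DF-surjective}, and $\mathrm{int}\,K\ne\emptyset$ since $K=K_\infty\times K_\infty\times K_\infty$ and $K_\infty$ has nonempty interior in $L^\infty(Q)$ (the constant function $-1$ lies in it, together with a whole ball). So the whole issue is to produce a vector $\widetilde z=(\widetilde y,\widetilde u)\in\mathrm{Ker}\,DF(\bar z)$ for which $G(\bar z)+DG(\bar z)\widetilde z\in\mathrm{int}\,K$; since $\mathrm{int}\,K_\infty=\{v\in L^\infty(Q):v\le-c\text{ a.e. for some }c>0\}$, this amounts to making each of the three components of $G(\bar z)+DG(\bar z)\widetilde z$ bounded above by a negative constant. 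First I would record the structure of $\mathrm{Ker}\,DF(\bar z)$: by the bijectivity of $F_y(\bar z)$ proved inside Lemma~\ref{Lemma-DF-surjective} (which rests on Lemma~\ref{Lemma-LinearizedEq}), to any $\widetilde u\in L^\infty(Q)\hookrightarrow L^p(0,T;H)$ there corresponds a unique $\widetilde y\in Y$ solving $\partial_t\widetilde y+A\widetilde y+f'(\bar y)\widetilde y=\widetilde u$, $\widetilde y(0)=0$, and then $(\widetilde y,\widetilde u)\in\mathrm{Ker}\,DF(\bar z)$; thus $\widetilde u$ can be prescribed freely in $L^\infty(Q)$.

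Next I would pick the perturbation. For the two box components, $\widetilde u$ is essentially forced to be a small ``interior push'' of $\bar u$: fix $\sigma\in(a,b)$ and a small $\theta\in(0,1]$ and set $\widetilde u:=\theta(\sigma-\bar u)$ (so the control level $\bar u$ is replaced by the strictly interior level $(1-\theta)\bar u+\theta\sigma$). Using $DG_1(\bar z)\widetilde z=\widetilde u$, $DG_2(\bar z)\widetilde z=-\widetilde u$ and $a\le\bar u\le b$, a direct computation gives
\begin{align*}
G_1(\bar z)+DG_1(\bar z)\widetilde z&=(1-\theta)(\bar u-b)+\theta(\sigma-b)\le\theta(\sigma-b)<0,\\
G_2(\bar z)+DG_2(\bar z)\widetilde z&=(1-\theta)(a-\bar u)+\theta(a-\sigma)\le\theta(a-\sigma)<0,
\end{align*}
uniformly on $Q$, so the box components are settled with no further work. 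Moreover $\|\widetilde y\|_{L^\infty(Q)}\le C\theta$ by the estimate \eqref{KeyInq0} applied to the linearized equation, so $\widetilde y$ is small together with $\theta$. (If this turns out to be insufficient for the third component, I would instead take a $\widetilde u$ adapted to $\bar u$ --- negative where $\bar u$ is close to $b$, positive where it is close to $a$, e.g.\ of the form $\widetilde u=a-\bar u+\eta$ --- which handles the box components in the same way and leaves more room in the mixed constraint.)

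The remaining and decisive point is the mixed component
\[
G_3(\bar z)+DG_3(\bar z)\widetilde z=\epsilon\bar u+g[\cdot,\cdot]+\epsilon\widetilde u+g_y[\cdot,\cdot]\,\widetilde y,
\]
and here $(H4)$ must be used in full. Writing $\widetilde u=\partial_t\widetilde y+A\widetilde y+f'(\bar y)\widetilde y$ from the linearized equation, one has
\[
\epsilon\widetilde u+g_y[\cdot,\cdot]\,\widetilde y=\epsilon\,(\partial_t\widetilde y+A\widetilde y)+\big(\epsilon f'(\bar y)+g_y[\cdot,\cdot]\big)\widetilde y,
\]
where $q:=\epsilon f'(\bar y)+g_y[\cdot,\cdot]\ge 0$ by the first part of $(H4)$, so the zeroth--order contribution of $\widetilde y$ has the favorable sign whenever $\widetilde y\le 0$. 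Combining this with the separation inequality $a-\bar u+\epsilon\bar u+g[\cdot,\cdot]\le-\gamma$ from the second part of $(H4)$ (which, among other things, forces $\bar u-a$ to stay above a positive amount precisely on the set where $\epsilon\bar u+g[\cdot,\cdot]$ is close to $0$, leaving room to make $\widetilde u$ strictly negative there) and with the maximum--principle $L^\infty$ estimate for $\widetilde y$ as in the proof of Lemma~\ref{Lemma-stateEq}, I expect to obtain, after choosing the free parameter small enough,
\[
G_3(\bar z)+DG_3(\bar z)\widetilde z\le-\delta_0<0\quad\text{a.e.\ in }Q
\]
for some $\delta_0>0$. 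I expect this last estimate to be the main obstacle: one must quantify the balance between the gain coming from the margin $\gamma$ (and from the freedom in $\widetilde u$) and the loss coming from $g_y[\cdot,\cdot]\,\widetilde y$, so that the mixed constraint is uniformly strict. Once the three strict inequalities are in place, $G(\bar z)+DG(\bar z)\widetilde z\in\mathrm{int}\,K$, which together with Lemma~\ref{Lemma-DF-surjective} is condition \eqref{Robinson.2}; by the equivalence \eqref{Robinson.1}$\Leftrightarrow$\eqref{Robinson.2} (valid since $\mathrm{int}\,K\ne\emptyset$), the Robinson constraint qualification holds at $\bar z$.
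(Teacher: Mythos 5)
Your setup is right: reducing to \eqref{Robinson.2}, describing ${\rm int}\,K_\infty$, using Lemma \ref{Lemma-DF-surjective} for surjectivity, and observing that $\widetilde u$ may be prescribed freely in $L^\infty(Q)$ with $\widetilde y$ determined by the linearized equation --- all of this matches the paper. Your treatment of the two box components with $\widetilde u=\theta(\sigma-\bar u)$ is also correct as far as it goes. But the proof has a genuine gap exactly where you yourself flag it: the mixed component is never actually controlled, and the concrete candidate you offer fails. Take a point of $Q_0$ (so $g[x,t]+\epsilon\bar u=0$, hence zero slack) at which $\bar u=a+\gamma<\sigma$; there $\epsilon\widetilde u=\epsilon\theta(\sigma-\bar u)>0$, and $g_y[\cdot,\cdot]\,\widetilde y$ has no sign (with $g_y\le 0$ it is $\ge 0$ wherever $\widetilde y\le 0$, and $\widetilde y$ is determined nonlocally by a sign-changing source), so $G_3(\bar z)+DG_3(\bar z)\widetilde z$ can be strictly positive. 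No choice of the small parameter repairs this, because the slack of the mixed constraint vanishes on $Q_0$ while the perturbation is only $O(\theta)$ with uncontrolled sign. The fallback $\widetilde u=a-\bar u+\eta$ has the same defect: $\widetilde y$ is then $O(1)$ and $g_y\widetilde y$ is again signless, so ``quantifying the balance'' cannot be done pointwise.

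The missing idea is a feedback construction that kills the problematic combination identically. You actually write down the key identity $\epsilon\widetilde u+g_y\widetilde y=\epsilon\bigl[\partial_t\widetilde y+A\widetilde y+\bigl(f'(\bar y)+\tfrac{1}{\epsilon}g_y\bigr)\widetilde y\bigr]$ but do not exploit it: the paper \emph{defines} $\widetilde y$ (up to a later correction) as the solution of $\partial_t\widetilde y+A\widetilde y+\bigl(f'(\bar y)+\tfrac{1}{\epsilon}g_y\bigr)\widetilde y=-\rho$, $\widetilde y(0)=0$, and then reads off $\widetilde u$ from the linearized state equation. This makes $\epsilon\widetilde u+g_y\widetilde y\equiv-\epsilon\rho$ pointwise, so \eqref{rrro.3} holds with margin $\epsilon\rho$ wherever the construction is unmodified; and since $f'(\bar y)+\tfrac{1}{\epsilon}g_y\ge 0$ by the first part of $(H4)$, the maximum principle gives $\widetilde y\le 0$ and $\|\widetilde y\|_{L^\infty}+\|\widetilde u\|_{L^\infty}\le C\rho$, which yields \eqref{rrro.1}. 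The resulting $\widetilde u\le-\rho$ violates the lower box bound where $\bar u$ is at or near $a$, so the control must be patched upward on the sets $Q_a$ and $Q_\rho$; it is only there that the second part of $(H4)$ enters, guaranteeing slack $\gamma$ in the mixed constraint where $\bar u$ is near $a$ so that the patch does not destroy \eqref{rrro.3}. Your proposal contains the raw ingredients (the identity, the sign of $f'+\epsilon^{-1}g_y$, the role of $\gamma$) but leaves the decisive step --- the choice of $\widetilde u$ making the third inequality uniformly strict --- as an unproven expectation, so as written it does not establish the lemma.
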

\begin{proof}  It is easy to see that 
    \begin{align*}
        {\rm int}K_\infty =  \{  v \in L^\infty(Q) \ {\rm and} \ {\rm essup}v < 0 \}
    \end{align*} and 
    $$
    {\rm int} K=({\rm int }K_\infty)\times ({\rm int }K_\infty)\times({\rm int }K_\infty).
    $$
By Lemma \ref{Lemma-DF-surjective}, condition $(i)$ of (\ref{Robinson.2}) is satisfied. It remains to show that the condition $(ii)$ of (\ref{Robinson.2}) is satisfied, that is
\begin{align}
    & \bar u - b + \widetilde u \le - \delta, \label{rrro.1} \\
    & a - \bar u - \widetilde u \le - \delta, \label{rrro.2} \\
    & g[x, t] + \epsilon \bar u +  g_y[x, t] \widetilde y + \epsilon \widetilde u \le - \delta \label{rrro.3}
\end{align}
for some $\delta > 0$ and for some $\widetilde z = (\widetilde y, \widetilde u) \in Y \times U$ satisfying the equation
\begin{align}
\label{rrro.4}
    \frac{\partial \widetilde y}{\partial t} + A \widetilde y + f'(\bar y) \widetilde y = \widetilde u \quad {\rm in} \ Q, \quad \quad \widetilde y = 0 \quad {\rm on} \ \Sigma, \quad \quad \widetilde y(0) = 0 \quad {\rm in} \ \Omega.
\end{align}
To show the existence of $\widetilde z$, we do this as follows.  For $0<\rho <  1$, we define
\begin{align*}
    &\alpha = \alpha(x, t) :=  \frac{1}{\epsilon}g_y[x, t] \le 0   \quad \quad {\rm a.a.}\quad   \  (x, t) \in Q \\
    &w_{\rho} := \bar u - \rho + \alpha z_{\bar u}.
\end{align*}
Here we denote by $z_\zeta$ the solution of equation $\nabla_{(y, u)}F(\bar z)(y, u) = 0$ with $u = \zeta$. Namely, $z_\zeta$ is a solution of the equation
$$
z_t + Az + f'(\bar y) z=\zeta,\  z(0)=0. 
$$
By Lemma \ref{Lemma-LinearizedEq}, the equation 
\begin{align}
    \frac{\partial \xi}{\partial t} + A \xi + [f'(\bar y)  +\alpha]\xi = w_\rho \quad {\rm in} \ Q, \quad \quad \xi = 0 \quad {\rm on} \ \Sigma, \quad \quad \xi(0) = 0 \quad {\rm in} \ \Omega.
\end{align}
has a unique solution $\xi_\rho$.  This implies that 
$$
 \frac{\partial \xi_\rho}{\partial t} + A \xi_\rho + f'(\bar y)\xi_\rho=  w_\rho  -\alpha\xi_\rho.
$$ Therefore, if we set 
\begin{align}
\label{rro.1}
    u_\rho = w_\rho - \alpha \xi_\rho = \bar u - \rho - \alpha(\xi_\rho- z_{\bar u})
\end{align}
then $\xi_\rho = z_{u_\rho}$. Subtracting the equation satisfied by $z_{\bar u}$ from the equation satisfied by $z_{u_\rho}$, we get
\begin{align}
\label{rro.1.1}
\begin{cases}
    \frac{\partial (z_{u_\rho} - z_{\bar u})}{\partial t} + A (z_{u_\rho} - z_{\bar u}) + f'(\bar y) (z_{u_\rho} - z_{\bar u}) = u_\rho - \bar u \quad {\rm in} \ Q, \\
    z_{ u_\rho} - z_{\bar u} = 0 \quad  {\rm on} \ \Sigma, \\
    (z_{u_\rho} - z_{\bar u})(0) = 0 \quad  {\rm in} \ \Omega.
\end{cases}
\end{align}
On the other hand,  from (\ref{rro.1}) we have
\begin{align}
    \label{rro.2}
    u_\rho - \bar u  = - \rho - \alpha (\xi_\rho- z_{\bar u}) =  -\rho - \alpha (z_{u_\rho}- z_{\bar u}).
\end{align}
Inserting (\ref{rro.2}) into (\ref{rro.1.1}), we obtain
\begin{align}
    \label{rro.3}
    \begin{cases}
        \frac{\partial (z_{u_\rho} - z_{\bar u})}{\partial t} + A (z_{u_\rho} - z_{\bar u}) + [f'(\bar y) + \alpha(x, t)] (z_{u_\rho} - z_{\bar u}) = -\rho \quad  {\rm in} \ Q,  \\
        z_{ u_\rho} - z_{\bar u} = 0 \quad  {\rm on} \ \Sigma,  \\
        (z_{u_\rho} - z_{\bar u})(0) = 0 \quad  {\rm in} \ \Omega.
    \end{cases}
\end{align} In this equation, we have $ -\rho <0$ and 
$f'(\bar y) + \alpha(x, t)=f'(\bar y) +\frac{1}{\epsilon} g_y[x,t]\geq 0$ because of $(H4)$. Then the maximum principle (see \cite[Theorem 7.2, p. 188]{Ladyzhenskaya-1988}) implies that 
\begin{align}
    \label{rro.4}
    \xi_\rho = z_{u_\rho} \leq z_{\bar u} \quad \quad {\rm and} \quad \quad \|z_{u_\rho} - z_{\bar u}\|_{L^\infty(Q)} \le C_1\rho.
\end{align}
Combining this and (\ref{rro.2}), we get
\begin{align}
    \label{rro.5}
    \|u_\rho - \bar u\|_{L^\infty(Q)} \le C_2\rho
\end{align} for some constant $C_2>0$. 
Define 
\begin{align*}
    & Q_a = \{(x, t) \in Q: \bar u(x, t) = a \}, \\
    & Q_\rho = \{(x, t) \in Q: a < \bar u(x, t) \le  a +  2C_2\rho \}.
\end{align*}
Obviously, we have $Q_a  \cap Q_\rho = \emptyset$. Set 
\begin{align}
\label{rro.6}
    \widehat u_\rho = (a + \rho)\chi_a + (\bar u + \rho)\chi_\rho +  u_\rho (1 - \chi_a - \chi_\rho),
\end{align}
where $\chi_a$ is a characteristic function of the set $Q_a$ and $\chi_\rho$ is a characteristic function of $Q_\rho$. We now claim that  for $\rho$ small enough, the couple
\begin{align}
\label{rro.7}
    \widetilde z := (z_{\widehat u_\rho} - z_{\bar u}, \quad \widehat u_\rho - \bar u) \in Y \times  U
\end{align}
satisfies (\ref{rrro.1})-(\ref{rrro.4}).  Indeed, the definition of $\widetilde z$ in (\ref{rro.7}) implies that (\ref{rrro.4}) is satisfied. On $Q_a$, we have $\widehat u_\rho = a + \rho$; on $Q_\rho$, $\widehat u_\rho = \bar u + \rho \ge a + \rho$; and on $Q\backslash (Q_a \cup Q_\rho)$, since (\ref{rro.5}), $\widehat u_\rho = u_\rho \ge \bar u - | u_\rho - \bar u| \geq a + 2C_2\rho - C_2\rho = a + C_2\rho$. Hence, we have
\begin{align}
\label{rro.8}
    \widehat u_\rho \ge {\rm min}(a + \rho, a + C_2\rho) = a + \rho{\rm min}(1, C_2) \ \ {\rm a.a.} \ (x, t) \in Q.
\end{align}
$\bullet$ Verification of (\ref{rrro.1}). Firstly, on $Q_a$, we have
\begin{align}
    \bar u - b +  (\widehat u_\rho - \bar u) = \widehat u_\rho - b = a + \rho - b = \rho - (b - a) \le -  \frac{b - a}{2} < 0
\end{align}
for $\rho$ small enough. Secondly, on $Q_\rho$,  we have
\begin{align}
     \bar u - b +  (\widehat u_\rho - \bar u) = \widehat u_\rho - b = \bar u + \rho - b &\le a + 2C_2\rho + \rho - b \nonumber \\
     &= \rho (1 + 2C_2) - (b - a) \le -  \frac{b - a}{2} < 0
\end{align} for $\rho$ small enough. Thirdly, on $Q\backslash (Q_a \cup Q_\rho)$, we have from the definition of $\alpha$ and  (\ref{rro.4}) that 
$$
\alpha (z_{u_\rho} - z_{\bar u}) \geq 0.
$$  Hence 
\begin{align}
    \bar u - b +  (\widehat u_\rho - \bar u) = \widehat u_\rho - b =  u_\rho - b & = (\bar u - b) - \alpha (z_{u_\rho} - z_{\bar u}) - \rho \leq - \rho.
\end{align}
Thus, (\ref{rrro.1}) is satisfied with $\widetilde z$ defined by (\ref{rro.7}) for $\rho$ small enough.

\noindent $\bullet$ Verification of \eqref{rrro.2}. From (\ref{rro.8}) we have 
\begin{align}
    a - \bar u - (\widehat u_\rho - \bar u) = a - \widehat u_\rho  \le a  - [a + \rho{\rm min}(1, C_2)] = - \rho{\rm min}(1, C_2) < 0.
\end{align}
This implies that  (\ref{rrro.2}) is satisfied with $\widetilde z$ defined in (\ref{rro.7}) and for $\rho$ small enough.\\
$\bullet$ Verification of  (\ref{rrro.3}). Use (\ref{rro.4}) and (\ref{rro.5}), we get
\begin{align}
    \| z_{\widehat u_\rho} - z_{\bar u}\|_{L^\infty(Q)} &\le \| z_{\widehat u_\rho} - z_{u_\rho}\|_{L^\infty(Q)} + \| z_{u_\rho} - z_{\bar u}\|_{L^\infty(Q)} \nonumber \\
    &\le C_5\| \widehat u_\rho -  u_\rho\|_{L^\infty(Q)} + C_1 \rho \nonumber \\
    &= C_5\| (\bar u - u_\rho + \rho)(\chi_a + \chi_\rho) \|_{L^\infty(Q)} + C_1 \rho 
    \le C_4 \rho.
\end{align}
On the other hand, on  $Q_a \cup Q_\rho$, we have the following estimations
\begin{align*}
\bar u \le a + 2C_2 \rho,\  \widehat u_\rho - \bar u = \rho,\
g_y[x, t] \le 0,\  - (z_{\widehat u_\rho} - z_{\bar u}) \le \| z_{\widehat u_\rho} - z_{\bar u}\|_{L^\infty(Q)} \le C_4 \rho
\end{align*}
 Combining these facts with $(H4)$,  we obtain the following estimations on $Q_a \cup Q_\rho$:
\begin{align}
    g[x, t] + \epsilon \bar u + g_y[x, t](z_{\widehat u_\rho} - z_{\bar u}) + \epsilon (\widehat u_\rho - \bar u)  &\le  - \gamma + (\bar u - a) - C_4g_y[x, t]\rho + \epsilon \rho  \nonumber \\
    & \le  - \gamma + 2 C_2 \rho - C_4g_y[x, t]\rho  + \epsilon \rho   \nonumber \\
    & \le  - \gamma +  \rho (2 C_2  - C_4g_y[x, t] + \epsilon  ) \le \frac{- \gamma}{2}
\end{align}
 for $\rho$ small enough. It remains  to show that the condition is satisfied on $Q\backslash (Q_a \cup Q_\rho)$. Indeed, since $g_y[\cdot, \cdot]\leq 0$, $\alpha=\frac{1}{\epsilon}g_y[x, t]\leq 0$. Hence 
 $$
 \alpha (z_{u_\rho} - z_{\bar u})=\frac{1}{\epsilon}g_y[x, t](z_{ u_\rho} - z_{\bar u})\geq 0.
 $$ On the other hand 
\begin{align}
    \widehat u_\rho - u_\rho =
    \begin{cases}
        2 \rho + \alpha (z_{u_\rho} - z_{\bar u}) > 0 \quad {\rm on} \  Q_a \cup Q_\rho, \\
        0 \quad {\rm on} \ Q \backslash (Q_a \cup Q_\rho).
    \end{cases}
\end{align} This implies that $\widehat u_\rho- u_\rho\geq 0$. The maximum principle implies that $z_{\widehat u_\rho}\geq z_{ u_\rho}$.  Hence 
$$
g_y[x, t](z_{\widehat u_\rho} - z_{u_\rho})\leq 0.
$$
Combining these facts with (\ref{rro.2}), we have on $Q \backslash (Q_a \cup Q_\rho)$ that
\begin{align*}
    &(g[x, t] + \epsilon \bar u) + g_y[x, t] (z_{\widehat u_\rho} - z_{\bar u}) + \epsilon ( \widehat u_\rho - \bar u)\\
     &=(g[x, t] + \epsilon \bar u) + g_y[x, t] (z_{\widehat u_\rho} - z_{\bar u}) + \epsilon ( u_\rho - \bar u)\\
    &\leq g_y[x, t] (z_{\widehat u_\rho} - z_{\bar u}) + \epsilon (u_\rho - \bar u) \\
    &= g_y[x, t] (z_{\widehat u_\rho}- z_{u_\rho}) + g_y[x,t]( z_{u_\rho}- z_{\bar u}) + \epsilon ( u_\rho - \bar u)\\
    &=g_y[x, t] (z_{\widehat u_\rho}- z_{u_\rho}) + g_y[x,t]( z_{ u_\rho}- z_{\bar u}) -\epsilon \rho -g_y[x,t]( z_{ u_\rho}- z_{\bar u})\\
    &=g_y[x, t] (z_{\widehat u_\rho}- z_{ u_\rho}) -\epsilon \rho\leq -\epsilon \rho.
\end{align*}  Hence (\ref{rrro.3}) is satisfied with $\widetilde z$ defined in (\ref{rro.7}) and for $\rho$ small enough. The Lemma is proved. 
\end{proof}

In the sequel, we will use the following sets:
    \begin{align}
    & Q_a  := \{ (x, t) \in Q: \bar u(x, t) = a \},\label{Qa} \\
    & Q_b := \{ (x, t) \in Q: \bar u(x, t) = b \}, \label{Qb}\\
    &Q_{ab}:=\{(x, t)\in Q: a<\bar u(x, t)< b\},\\
    & Q_0  := \{ (x, t) \in Q: g[x, t] + \epsilon \bar u(x, t) = 0 \}.\label{Q0}
    \end{align}
The following theorem gives  first and second-order necessary optimality conditions and the regularity of multipliers.

\begin{theorem}\label{Theorem 1}  Suppose that $(\bar y, \bar u) \in \Phi$ is a locally optimal solution of the problem \eqref{P1}-\eqref{P5} under which assumptions $(H1)-(H4)$ are satisfied.  Then for each critical direction $d=(y, u)\in\mathcal{C}[\bar z]$, there exist a function $\varphi \in L^\infty(Q)\cap W^{1,1}_2(0, T; D, H)$ and  functions $e, \widehat e \in L^\infty(Q)$ such that the following conditions are fulfilled:

\noindent $(i)$ (the adjoint equation)
\begin{align}
-\frac{\partial \varphi }{\partial t } + A^* \varphi + f'(\bar y)\varphi = - L_y[\cdot, \cdot]  - e g_y[., .], \ \ \ \varphi(\cdot, T) = 0, 
\end{align}
where $A^*$ is the adjoint operator of $A$, which is defined by 
\begin{align*}
    A^* \varphi = - \sum_{i, j = 1}^N D_i (a_{ij}(x)D_j\varphi);
\end{align*}

\noindent $(ii)$ (the stationary condition in $u$)
\begin{align}
L_u[x,t]-\varphi(x,t) + \epsilon e(x,t) + \widehat e(x, t) = 0 \quad {\rm a.a.}\quad (x,t)\in Q;
\end{align}

\noindent $(iii)$ (the complimentary condition) $e(x, t) \ge 0 $ and $e(x,t)(g[x,t] + \epsilon \bar u(x, t)) = 0$  a.a. $(x, t) \in Q$. $\widehat e(x,t)$ has the property that 
\begin{align}
\widehat e(x, t)
\begin{cases}
\leq 0 \quad &{\rm a.a. }\quad (x,t)\in Q_a, \\
\geq 0\quad & {\rm a.a. } \quad (x,t)\in Q_b, \\
 0\quad &{\rm a.a. } \quad (x,t)\in Q_{ab}.
\end{cases}
\end{align}  	

\noindent $(iv)$ (the non-negative second-order condition) 
\begin{align}
    \int_Q(L_{yy}[x, t]y^2 + 2L_{yu}[x, t]yu + L_{uu}[x, t]u^2)dxdt &+ \int_Q e(x, t)g_{yy}[x, t]y^2 dxdt \nonumber \\ 
    &+ \int_Q \varphi f''(\bar y) y^2 dxdt \geq 0 
\end{align}
\end{theorem}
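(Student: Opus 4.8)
The plan is to apply the abstract machinery of Section 2 — Propositions \ref{Pro-FirstOptim} and \ref{Pro-SecondOptim} — to the concrete reformulation \eqref{P'1}--\eqref{P'3} of the parabolic problem, and then to unwind the abstract Lagrange multipliers into the claimed densities $\varphi$, $e$, $\widehat e$. The preliminary verification is essentially already assembled: hypothesis $(H3)$ guarantees $\psi = J$, $F$, $G$ are of class $C^2$ on $Z = Y \times U$ (so $(A_1^2)$ holds); $(A_2)$ holds because ${\rm int}\, K_\infty \ne \emptyset$ as noted in the proof of Lemma \ref{Lemma-RobinsonCQ}; $(A_3)$ holds since $DF(\bar z)$ is surjective by Lemma \ref{Lemma-DF-surjective}, so $DF(\bar z)Z = W$ is closed; and Lemma \ref{Lemma-RobinsonCQ} gives the Robinson constraint qualification \eqref{Robinson.2}. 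Hence Proposition \ref{Pro-SecondOptim} applies: for each $d = (y,u) \in \mathcal{C}_0[\bar z]$ there is $(e^*, w^*) \in \Lambda_1[\bar z]$ with $D^2_z\mathcal{L}(\bar z, e^*, w^*)[d,d] \ge 0$, where $w^* = (w_1^*, w_2^*, w_3^*)$ sits in $N(K, G(\bar z)) = N(K_\infty, \bar u - b) \times N(K_\infty, a - \bar u) \times N(K_\infty, g[\cdot,\cdot] + \epsilon \bar u)$, and $e^* = (e_1^*, e_2^*) \in W_1^* \times W_2^*$. One should first check that $\mathcal{C}_0[\bar z]$, as defined by $(c_1)$--$(c_3)$, is exactly the set $\mathcal{C}_0[z_0]$ of Section 2 for the present $\psi, F, G$ — this is a direct translation of $(a_1)$--$(a_3)$ using $\nabla J(\bar z)(y,u) = \int_Q (L_y[x,t]y + L_u[x,t]u)$, $DF(\bar z)(y,u) = (y_t + Ay + f'(\bar y)y - u,\, y(0))$, and the product structure of $K$.

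The core of the proof is the \emph{regularity} step: showing that the abstract multipliers are represented by the asserted $L^\infty$ / $W^{1,1}_2(0,T;D,H)$ densities. This is where $(H4)$ and the Yosida--Hewitt decomposition enter. The multiplier $w_3^*$ corresponding to the mixed constraint $G_3(y,u) = \epsilon u + g(y) \le 0$ is a priori only an element of $(L^\infty(Q))^* = ba(Q)$, i.e. a finitely additive measure. The strategy, following \cite{Rosch2} and \cite{Yosida}, is: (a) decompose $w_3^* = e \, dx\,dt + \mu_s$ into its countably additive part (a density $e \in L^1(Q)$, in fact $e \ge 0$ since $w_3^* \in N(K_\infty, \cdot)$) and a purely finitely additive singular part $\mu_s \ge 0$; (b) use the separation-type condition $(H4)$ — which furnishes the feasible direction $\widetilde z$ of Lemma \ref{Lemma-RobinsonCQ} with strict inequality $a - \bar u + \epsilon \bar u + g[\cdot,\cdot] \le -\gamma$ — to test the stationarity identity against a direction that "sees" only the support of $\mu_s$, forcing $\langle \mu_s, \text{(that direction)}\rangle = 0$ and hence $\mu_s = 0$; likewise $w_1^*, w_2^*$ are shown to be genuine $L^\infty$ functions, assembled into $\widehat e := -\,$(multiplier for the lower bound)$\,+\,$(multiplier for the upper bound), with the stated sign pattern on $Q_a$, $Q_b$, $Q_{ab}$ coming from $w_i^* \in N(K_\infty, \cdot)$ and the complementarity $w_i^*$ being supported where the corresponding constraint is active. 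Once $e \in L^1(Q)$ is known, bootstrapping through the adjoint equation — which reads $-\varphi_t + A^*\varphi + f'(\bar y)\varphi = -L_y[\cdot,\cdot] - e\, g_y[\cdot,\cdot]$, $\varphi(T) = 0$ — together with parabolic regularity (Lemma \ref{Lemma-stateEq} applied to the backward-in-time equation) and the stationarity relation $(ii)$, upgrades $e$ (and $\widehat e$, $\varphi$) to $L^\infty$; the identity $(ii)$ then gives $L_u - \varphi + \epsilon e + \widehat e = 0$, and $\varphi \in W^{1,1}_2(0,T;D,H) \cap L^\infty(Q)$ follows from the same argument as in the proof of Lemma \ref{Lemma-stateEq}.

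To extract $(i)$--$(iii)$ concretely: the stationarity condition $D_z \mathcal{L}(\bar z, 1, e^*, w^*) = 0$ splits into the $y$-equation $\nabla_y J(\bar z) + DF_y(\bar z)^* e^* + DG_y(\bar z)^* w^* = 0$ and the $u$-equation $\nabla_u J(\bar z) + DF_u(\bar z)^* e^* + DG_u(\bar z)^* w^* = 0$. Introducing $\varphi$ as the solution of the backward adjoint equation with right-hand side $-L_y - e\, g_y$ (using $(H2)$'s ellipticity and boundedness of $f'(\bar y)$, and the just-established $e \in L^1$ then $L^\infty$), the first equation becomes the statement "$e_1^*$ acts as integration against $\varphi$" after an integration-by-parts using \eqref{IntegralByPart} and the transposition identity $\langle e_1^*, F_{1y}(\bar z) y\rangle = \int_Q \varphi (y_t + Ay + f'(\bar y)y)$; the second equation then reads precisely $(ii)$. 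The complementarity $e \ge 0$, $e(g[\cdot,\cdot] + \epsilon \bar u) = 0$ a.e. is the density form of $w_3^* \in N(K_\infty, g[\cdot,\cdot] + \epsilon \bar u)$. Finally, for the second-order inequality $(iv)$ one computes $D^2_z \mathcal{L}(\bar z, 1, e^*, w^*)[d,d]$: since $F$ is affine in $u$ and $G_1, G_2$ are affine, the only second-order contributions are $D^2 J(\bar z)[d,d] = \int_Q (L_{yy} y^2 + 2 L_{yu} y u + L_{uu} u^2)$, the term $\langle w_3^*, g_{yy}[\cdot,\cdot]y^2\rangle = \int_Q e\, g_{yy}[x,t] y^2$ (using that $w_3^*$ has density $e$ and $\mu_s = 0$ — this is exactly why killing the singular part matters), and $\langle e_1^*, f''(\bar y)y^2\rangle = \int_Q \varphi f''(\bar y) y^2$ from $D^2 F_1$. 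Adding these gives $(iv)$ directly from Proposition \ref{Pro-SecondOptim}, for $d \in \mathcal{C}_0[\bar z]$, and then for $d \in \mathcal{C}[\bar z]$ by the continuity/density argument in the commented-out part of that proposition's proof (the max over the weak-$*$ compact set $\Lambda_1[\bar z]$ is continuous in $d$).

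\textbf{The main obstacle} is step (b) above — proving the singular part $\mu_s$ of the mixed-constraint multiplier vanishes. The delicate point is that the strict-feasibility direction $\widetilde z$ of $(H4)$ must be turned into a test that isolates the singular part: one takes a decreasing sequence of sets $Q_n$ with $\mu_s$ concentrated "at infinity" relative to $dx\,dt$, and constructs admissible variations supported appropriately so that the countably additive part and the cost terms pass to zero while $\langle \mu_s, \cdot\rangle$ stays bounded below by a positive multiple of $\gamma$ — contradiction unless $\mu_s = 0$. This requires care with the Yosida--Hewitt theory on $L^\infty(Q)$ and the precise interplay between $(H4)$ and the active set $Q_0$; it is the technical heart of the theorem and the place where the "separation condition" terminology from the introduction is actually used.
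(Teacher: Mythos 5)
Your proposal follows essentially the same route as the paper's proof: reformulate the problem as \eqref{P'1}--\eqref{P'3}, verify the Robinson constraint qualification via Lemmas \ref{Lemma-DF-surjective} and \ref{Lemma-RobinsonCQ}, apply Proposition \ref{Pro-SecondOptim}, use the Yosida--Hewitt decomposition together with the separation condition $(H4)$ to annihilate the purely finitely additive parts of the inequality-constraint multipliers (the paper does this exactly as you sketch, by testing the stationarity identity in $u$ against $\chi_{Q_n}\widehat u$ for a sign-adapted $\widehat u$ supported on the separated active sets), identify the state-equation multiplier with the solution $\varphi$ of the backward adjoint equation by transposition, and bootstrap $e$, $\widehat e$, $\varphi$ to $L^\infty$ through the stationarity relation $(ii)$. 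The approach and all key ingredients match; no substantive differences to report.
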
 
\begin{proof}  Let $K$, $F$ and $G$ be defined by \eqref{K-set}, \eqref{F-mapping} and \eqref{G-mapping}, respectively.  Then problem \eqref{P1}-\eqref{P5} is formulated in the form of problem \eqref{P'1}-\eqref{P'3}.  From $(H2)$ and $(H3)$ we see that the mappings $J, F$ and $G$ are of class $C^2$ around $\bar z  = (\bar y, \bar u)$.  Here $\nabla_{(y, u)}J(\bar z),  \nabla_{(y, u)}^2J(\bar z), D_{(y, u)}F(\bar z),  D^2_{(y, u)}F(\bar z)$,  $D_{(y, u)}G(\bar z)$ and $D^2_{(y, u)}G(\bar z)$ are given by 
\begin{align}
    &\nabla_{(y, u)}J(\bar z)(y, u) = \int_Q(L_y[x, t]y + L_u[x, t]u)dxdt, \label{D-J}\\
    &\nabla_{(y, u)}^2J(\bar z)(y, u)^2 = \int_Q(L_{yy}[x, t]y^2 + 2L_{yu}[x, t]yu + L_{uu}[x, t]u^2)dxdt, \label{D2-J} \\
    &D_{(y, u)}F(\bar z)(y, u) = (D_{(y, u)}F_1(\bar z)(y, u),\  D_{(y, u)}F_2(\bar z)(y, u)) \label{D-F}\\
    & \ \ \ {\rm where} \ \ \ D_{(y, u)}F_1(\bar z)(y, u) = \dfrac{\partial y}{\partial t} + Ay + f'(\bar y)y - u \ \ \ {\rm and} \ \ \ D_{(y, u)}F_2(\bar z)(y, u) = y(0), \\
    &D^2_{(y, u)}F(\bar z)(y, u)^2 = \bigg(D_{(y, u)}^2F_1(\bar z)(y, u)^2, \ \ D_{(y, u)}^2F_2(\bar z)(y, u)^2\bigg) \label{D2-F} \\
    & \ \ \ {\rm where} \ \ \ D^2_{(y, u)}F_1(\bar z)(y, u)^2 = f''(\bar y)y^2 \ \ \ {\rm and} \ \ \ \nabla_{(y, u)}^2F_2(\bar z)(y, u)^2 = 0,\\
    &D_{(y, u)}G(\bar z)(y, u) =  \big(D_{(y, u)}G_1(\bar z)(y, u), D_{(y, u)}G_2(\bar z)(y, u), D_{(y, u)}G_3(\bar z)(y, u)\big)\notag\\ 
    &=  (u, - u, g_y[x, t]y + \epsilon u), \label{D-G}\\
    &D^2_{(y, u)}G(\bar z)(y, u)^2  =  \bigg(D^2_{(y, u)}G_1(\bar z)(y, u)^2, D^2_{(y, u)}G_2(\bar z)(y, u)^2, D^2_{(y, u)}G_3(\bar z)(y, u)^2\bigg)\notag\\
    &= (0, 0,  g_{yy}[x, t]y^2)\quad \text{for all}\quad (y, u)\in Z.  \label{D2-G}
\end{align}  

 Let $q$ be a conjugate number of $p$ with $p > \frac{4}{4 - N}$, that is, $\frac{1}{p} + \frac{1}{q} = 1$. Since $H = L^2(\Omega)$, $H=H^*$. Hence dual of $L^p(0, T; H)$ is $L^q(0, T; H)$. We recall that $E^* = (L^\infty(Q))^*$ and $W^* = W_1^* \times W_2^*$, where $W_1^* = L^p(0, T; H)^* = L^q(0, T; H) \subset L^1(0, T; H)$ and $W_2^* = \big( L^\infty(\Omega) \cap H_0^1(\Omega) \big)^*$. Then the Lagrange function which is associated with the problem is defined as follows
\begin{align*}
    &\mathcal{L}: Y \times U \times L^q(0, T; H) \times \big( L^\infty(\Omega) \cap H_0^1(\Omega) \big)^* \times \big((L^\infty(Q))^*\big)^3  \to  \mathbb R, \\
    &\mathcal{L}(y, u, \phi_1, \phi_2^*, e_1^*, e_2^*, e_3^*) = \int_QL(x, t, y, u)dxdt + \int_Q\bigg[\phi_1 (\frac{\partial y}{\partial t} + Ay + f(y) -u) \bigg]dxdt \\
    & \hspace{5cm} + \left\langle \phi_2^*, y(0) - y_0 \right\rangle_{W_2^*, W_2}  +  \sum_{i = 1}^3 \langle e_i^*, G_i(y, u)\rangle_{L^\infty(Q)^*, L^\infty(Q)}.
\end{align*}
By Lemma \ref{Lemma-DF-surjective}, $DF(\bar z)$ is surjective. By Lemma \ref{Lemma-RobinsonCQ} the Robinson constraint qualification is valid at $\bar z$. Hence Proposition \ref{Pro-SecondOptim} is applicable.  According to Proposition \ref{Pro-SecondOptim},  for each $d= (y, u) \in \mathcal{C}[\bar z]$, there exist $\phi_1 \in L^q(0, T; H)$, $\phi_2^* \in \big( L^\infty(\Omega) \cap H_0^1(\Omega) \big)^*$ and $e^*=(e_1^*, e_2^*, e_3^*) \in L^\infty(Q)^*\times L^\infty (Q)^* \times L^\infty (Q)^*$  such that the following conditions hold:
\begin{align}
    &\nabla_{(y, u)}{\mathcal L}(\bar z, \phi_1, \phi_2^*, e^*) = 0,  \label{dk1} \\
    &e_1^* \in N(K_\infty, \bar u - b)  \label{dk2.1}, \\
    &e_2^* \in N(K_\infty, - \bar u + a )  \label{dk2.2}, \\
    &e_3^* \in N(K_\infty, g[\cdot, \cdot]+\epsilon \bar u),  \label{dk.3} \\
    &\nabla_{(y, u)}^2{\mathcal L}(\bar z, \phi_1, \phi_2^*, e^*)d^2 \ge 0 \label{dk4} 
    \end{align}
The condition \eqref{dk1} is equivalent to the two following relations 
\begin{align}
    \label{dk1.1}
    \int_Q L_y[x, t]ydxdt &+ \int_Q\bigg[\phi_1 (\frac{\partial y}{\partial t} + Ay + f'(\bar y)y  \bigg]dxdt \nonumber \\
    &+ \left\langle \phi_2^*, y(0) \right\rangle_{W_2^*, W_2} + \left\langle e_3^*, g_y[x, t]y\right\rangle_{L^\infty(Q)^*, L^\infty(Q)} = 0, \quad \forall y \in Y
\end{align}
and
\begin{align}\label{dk1.2}
\int_Q  L_u[x, t]udxdt - \int_Q\phi_1udxdt + \langle e_1^* - e_2^* + \epsilon e_3^*, u\rangle_{L^\infty(Q)^*, L^\infty(Q)} = 0, \quad \forall u \in U.
\end{align} From \eqref{dk2.1}-\eqref{dk.3}, we see that $e^*_i$ are non-negative functional. 
By Theorem 2.3 in \cite{Yosida}, $e^*_i$ with $i=1,2,3$ are finitely additive measures on $Q$. Let us claim that each $e_i^*$  can be represented by  densities in $L^1(Q)$. This can be done as follows. From Theorem 1.23 in \cite{Yosida}, we have representations
\begin{align}
e_i^*= e^*_{ic}+ e^*_{ip},\ e^*_{ic}\geq 0,\  e^*_{ip}\geq 0,
\end{align} where $e^*_{ic}$ is countably additive and $e^*_{ip}$ is purely finitely additive. 
Our goal is to show that $e^*_{ip}= 0$. Then $e_i^*$ is countably additive  which can be represented by densities in $L^1(Q)$.  According to \cite[Theorem 1.22]{Yosida}, there exists a sequence $(Q_n)_{n \in \mathbb N}$ of Lebesgue measurable sets such that  $Q \supset Q_1 \supset Q_2 \supset ... \supset Q_n \supset . \ . \ .$, ${\rm lim}_{n \to +\infty} |Q_n|  = 0$ and $e^*_{1p}(Q_n) =  e^*_{1p}(Q)$  for all  $n \in \mathbb N$. This means, that $|Q_n| = {\rm meas}(Q_n) \to 0$ as $n \to + \infty$, but 
\begin{align}
    \| e^*_{1p} \|_{L^\infty(Q)^*} = \int_Q d e^*_{1p}= \int_{Q_n} d e^*_{1p} = \langle e^*_{1p}, \chi_{Q_n} \rangle_{L^\infty(Q)^*, L^\infty(Q)} \quad \quad {\rm for \ all} \ n \in \mathbb N.
\end{align}
We now define 
\begin{align*}
    & A_1(\delta) = \{ (x, t) \in Q: \bar u (x, t) -b \ge   - \delta \},  \\
    &  A_2(\delta) = \{(x, t) \in Q: a - \bar u (x, t) \ge -\delta \}, \\
    & A_3(\delta) = \{(x, t) \in Q: g[x, t] + \epsilon \bar u(x, t) \ge -\delta \}.
\end{align*}
Then it is easy to check that $A_1(\delta) \cap A_2(\delta) = \emptyset$ for all $\delta \in (0, \frac{b-a}{2})$. Moreover, condition $(H4)$ guarantees that $A_3(\delta) \cap A_2(\delta) = \emptyset$ for $\delta$ small enough. Hence, we can choose $\delta = \delta^* \in (0, \frac{b-a}{2})$ such that $A_1(\delta^*) \cap A_2(\delta^*) = A_3(\delta^*) \cap A_2(\delta^*) = \emptyset$. Also, we have
\begin{align}
    \int_{Q\backslash A_i(\delta^*)} d e_i^* = 0,\quad i=1,2,3
\end{align}
because the support of $e_i^*$ is  a subset of $A_i(\delta^*)$. Next, we define 
\begin{align}
    \widehat u (x, t) =  
    \begin{cases}
        {\rm max}(\frac{1}{\epsilon}, 1)  \quad \quad {\rm on} \ \  A_1(\delta^*) \cup A_3(\delta^*) \\
        -1 \hspace{1.9cm} {\rm on} \ \  A_2(\delta^*) \\
        0 \hspace{2.2cm} {\rm otherwise}.
    \end{cases}
\end{align}
Then the function $v_n := \chi_{Q_n} \widehat u \in L^\infty(Q)$ for $n = 1, 2, ...$. Without loss  of generality, we can assume $Q_n \subset A_1(\delta^*)$. Inserting $u = v_n$ into (\ref{dk1.2}), we get 
\begin{align}
\label{reg.1}
    - \int_{Q_n} L_u[x, t] \widehat u dxdt + \int_{Q_n} \phi_1 \widehat u dxdt = \langle e_1^* +  \epsilon e_3^*, v_n \rangle_{L^\infty(Q)^*, L^\infty(Q)}.
\end{align}
Here we use the fact that ${\rm supp}(e_2^*) \subset Q \backslash A_1(\delta^*)$ which implies that $\langle e_2^*, v_n \rangle_{L^\infty(Q)^*, L^\infty(Q)} = 0$. Moreover, since $e_3^* \ge 0$ and $v_n \ge 0$ on ${\rm supp}(e_3^*)$, $(e_1^*)^c \ge 0$ and $v_n = \chi_{Q_n}{\rm max}(\frac{1}{\epsilon}, 1) \ge \chi_{Q_n}$ on ${\rm supp}(e_1^*)$, deduce from (\ref{reg.1}), we have
\begin{align*}
- \int_{Q_n} L_u[x, t] \widehat u dxdt + \int_{Q_n} \phi_1 \widehat u dxdt &\geq \langle e_1^*, v_n \rangle_{L^\infty(Q)^*, L^\infty(Q)}  \\
&=\int_Q v_n de^*_{1c} +  \int_Q v_n de^*_{1p} \geq \int_{Q} v_n d e^*_{1p}\\
&\geq\int_{Q_n} d e^*_{1p}= \int_Q d e^*_{1p} = \|e^*_{1p}\|_{L^\infty(Q)^*}.
\end{align*}
By letting $n \to + \infty$, we get $0\geq  \|e^*_{1p}\|_{L^\infty(Q)^*}$. Hence $e^*_{1p} = 0.$ By the same arguments, we can show that $e^*_{2p}=0$ and $e^*_{3p}= 0$. Therefore, the measures $e_1^*, e_2^*$ and $e_3^*$ are countably additive. By the Radon-Nikodym theorem (see Theorem 1.47 in \cite{Adams-1975}), $e_i^*$  can be represented by $L^1-$density, that is, there exists $e_i\in L^1(Q)$ such that 
$$
\int_Q v(x,t) d e_i^*=\int_Q e_i(x, t) v(x,t) dxdt\quad \forall v\in L^\infty(Q),\ i=1,2,3. 
$$ For simplification, we can identify $e_i^*$ by $e_i$ with $i=1,2,3$. Then from \eqref{dk2.1}, \eqref{dk2.2} and \eqref{dk.3}, we have 
\begin{align*}
    &\int_Q e_1(x,t)[\eta (x,t)-(\bar u(x,t)-b)] dxdt\leq 0\quad \forall \eta\in K_\infty;\\ 
    &\int_Q e_2(x,t)[\eta (x,t)-(-\bar u(x,t)+a)] dxdt\leq 0\quad \forall \eta\in K_\infty;\\
    &\int_Q e_3(x,t)[\eta (x,t)-(\epsilon\bar u(x,t)+g[x,t])] dxdt\leq 0\quad \forall \eta\in K_\infty.
\end{align*} By \cite[Corollary 4]{Pales}, we have
\begin{align*}
    &e_1(x,t)\in N((-\infty, 0], \bar u(x,t)-b) \quad \text{a.a}\quad (x, t)\in Q;\\
     &e_2(x,t)\in N((-\infty, 0], -\bar u(x,t)+a) \quad \text{a.a}\quad (x, t)\in Q;\\
     &e_3(x,t)\in N((-\infty, 0], g[x,t]+\epsilon\bar u(x,t)) \quad \text{a.a}\quad (x, t)\in Q.
\end{align*} This implies that $e_i(x, t)\geq 0$ with $i=1,2,3$, $e_1(x,t)(\bar u(x,t)-b)=0$, $e_2(x,t)(-\bar u(x,t)+a)=0$ and $e_3(x,t)( g[x,t]+\epsilon\bar u(x,t))=0$ for a.a. $(x, t)\in Q$. 
By setting 
\begin{align}
\label{def.of.e}
    e:= e_3 \quad \quad {\rm and}  \quad \quad \widehat e := e_1 - e_2,
\end{align} 
we see that $e, \widehat e \in L^1(Q)$ and satisfy the complementary condition $(iii)$ of the theorem. Next we derive the assertion $(ii)$ of the theorem. For this we rewrite \eqref{dk1.1} and \eqref{dk1.2} in  the following equivalent forms:
\begin{align}
    \label{dk2.1.a}
    \int_QL_y[x, t]ydxdt + &\int_Q\bigg[\phi_1 (\frac{\partial y}{\partial t} + Ay + f'(\bar y)y)  \bigg]dxdt \nonumber \\
    &+ \langle \phi_2^*, y(0) \rangle_{W_2^*, W_2} + \int_Q eg_y[x, t]ydxdt = 0, \quad \quad {\rm {for \ all}} \quad y \in Y
\end{align}
and
\begin{align}
\label{dk2.2.a}
    \int_Q L_u[x, t]udxdt - \int_Q\phi_1udxdt +  \int_Q (\epsilon e +  \widehat e)udxdt = 0, \quad \quad {\rm {for \ all}} \quad u \in U.
\end{align} The latter implies that 
\begin{align}\label{StationConditionIn-u1}
    L_u[\cdot, \cdot] -\phi_1 + \epsilon e +  \widehat e=0 \quad \text{a.a.}\quad (x,t)\in Q. 
\end{align} Define $\zeta=\phi_1-L_u[\cdot, \cdot]$. Then we have 
$\zeta\in L^q(0, T; H)$ and 
$$
\epsilon e+ \widehat e=\zeta. 
$$ Let us show that $\widehat e\in L^q(0, T; H)$. In fact, 
for $(x, t)\in Q_b$, we have  
$$
0\leq \widehat e(x, t)=\zeta(x, t)-\epsilon e(x, t)\leq \zeta(x, t);
$$ for $(x, t)\in Q_a$, we have from $(H4)$ that 
$$
\epsilon \bar u(x, t) + g[x,t]\leq-\gamma.
$$ The condition $e(x,t)[\epsilon \bar u(x, t) + g[x,t]]=0$ implies that $e(x, t)=0$ on $Q_a$ and so $\widehat e(x,t)=\zeta(x,t)$ on $Q_a$. Note that $\widehat e(x,t)=0$ for $(x,t)\in Q\setminus (Q_a\cup Q_b)$. For each $t\in[0, T]$, we define 
\begin{align*}
&\Omega_{at}=\{x\in\Omega: (x, t)\in Q_a\},\\
&\Omega_{bt}=\{x\in\Omega: (x, t)\in Q_b\}.
\end{align*} Then we have  
\begin{align*}
\int_0^T\big(\int_\Omega \widehat e(x,t)^2dx\big)^{q/2}dt&=\int_0^T\big(\int_{\Omega_{at}}\widehat e(x,t)^2dx + \int_{\Omega_{bt}}\widehat e(x,t)^2dx\big)^{q/2}dt\\
&\leq \int_0^T\big(\int_{\Omega_{at}}\zeta(x,t)^2dx+ \int_{\Omega_{bt}}\zeta(x,t)^2dx\big)^{q/2}dt \\
&\leq \int_0^T\big(\int_\Omega\zeta(x,t)^2dx\big)^{q/2}dt<+\infty
\end{align*} Hence $\widehat e\in L^q(0, T; H)$ and so is $e$. This implies that $e g_y[\cdot, \cdot]\in L^q(0, T; H)$. 

Let us consider the following equation
\begin{align}\label{Eq.Varphi.11}
    -\frac{\partial \varphi }{\partial t } + A^* \varphi + f'(\bar y)\varphi = - L_y[\cdot, \cdot]  - e g_y[\cdot,\cdot], \quad \quad   \varphi(\cdot, T) = 0, 
\end{align} 
where $- L_y[\cdot, \cdot]- e g_y[\cdot,\cdot]\in L^q(0, T; H)$. 
By changing variable $\widetilde \varphi(t)  = \varphi(T-t)$, the equation (\ref{Eq.Varphi.11}) becomes
\begin{align}
    \label{Eq.Varphi.2}
    \frac{\partial \widetilde \varphi (t)}{\partial t} + A^* \widetilde \varphi (t) +  f'(\bar y (x, T - t))  \widetilde \varphi (t) = -L_y[x, T-t] -  e(x, T-t)g_y[x, T-t], \quad     \widetilde \varphi(\cdot, 0) = 0.
\end{align} Let us choose $r>0$ such that 
$$
\frac{1}r +\frac{N}{2q}=1+\frac{N}4. 
$$ Then for $N=2$, we have from \eqref{Dimension} that  $q\in (\frac{4}3, 2)$. Hence $r\in (\frac{3}2, 2)$. When $N=3$, we have from   \eqref{Dimension} that  $q\in (\frac{6}5, \frac{4}3)=(\frac{2N}{N+2}, \frac{4}3)$. Hence $r\in (\frac{8}5, 2)$. It follows that 
$$
L^q(0, T; H)\hookrightarrow L^q(0, T; L^r(\Omega)). 
$$ Hence $- L_y[\cdot, \cdot]- e g_y[\cdot,\cdot]\in L^q(0, T; L^r(\Omega))$. By Theorem 4.1  in \cite{Ladyzhenskaya-1988} (page 153, Chapter 3), the above equation has a unique weak solution  
$\widetilde \varphi\in V_2(Q)= L^\infty(0, T; H)\cap L^2(0, T; V)$.  Hence $\varphi(t) = \widetilde \varphi(T - t)$ is a weak solution to the equation \eqref{Eq.Varphi.11}, that is 
\begin{align*}
&\int_\Omega \varphi(x,t_1)\eta(x,t_1)dx-\int_\Omega\varphi(x, 0)\eta(x,0)dx +\int_Q\varphi \eta_t dxdt +\int_Q(\sum_{i,j=1}^N a_{ij}D_i\eta D_j\varphi +f'(\bar y)\eta\varphi) dxdt\\
&=-\int_Q(L_y[\cdot, \cdot]+eg_y[\cdot, \cdot])\eta dxdt
\end{align*} for all test functions $\eta\in W^{1,1}_2(0, T; V, H)$ and $t_1\in[0, T]$. By taking $\eta=y\in Y$ with $y(0)=0$ and $t_1=T$, we have 
\begin{align*}
    \int_Q\big( y_t \varphi + \sum_{i,j=1}^N a_{ij}D_i y D_j\varphi  + f'(\bar y)y \varphi \big)dxdt = \int_Q\big(-L_y[x, t] - e(x, t)g_y[x, t] \big)ydxdt. 
\end{align*} Using the integration by part formula (see Theorem 1.5.3.1 in \cite{Grisvard}), we get 
\begin{align}\label{Eq.Varphi.4}
    \int_Q\big( y_t + A y + f'(\bar y)y \big) \varphi dxdt = \int_Q\big(-L_y[x, t] - e(x, t)g_y[x, t] \big)ydxdt. 
\end{align}
Subtracting (\ref{dk2.1.a}) from (\ref{Eq.Varphi.4}), we get 
\begin{align}
\label{Eq.Varphi.5}
    \int_Q (\varphi - \phi_1) \bigg( y_t + Ay  + f'(\bar y)y \bigg) dxdt = 0, \quad {\rm for \ all}\ y \in Y, \quad  y(0)=0.
\end{align}
On the other hand, as it was shown that for every $\vartheta \in L^p(0, T; H)$, the parabolic equation
\begin{align*}
    y_t  + Ay  + f'(y)y  = \vartheta, \quad \quad y(0) = 0 
\end{align*}
has a unique solution $y \in Y$. From this and \eqref{Eq.Varphi.5}, we deduce that 
\begin{align*}
\int_Q (\varphi - \phi_1) \vartheta dxdt  =  0 \quad \text {for  all}\quad  \vartheta \in L^p(0, T; H).
\end{align*} This implies  that 
\begin{align*}
  \phi_1= \varphi \in L^\infty (0, T; H)\cap L^2(0, T; V). 
\end{align*} Then \eqref{StationConditionIn-u1} becomes
\begin{align}\label{StationCond-u2}
     L_u[x, t]- \varphi +\epsilon e + \widehat e = 0, \quad \text{a.a.} \quad (x,t) \in Q.
\end{align} Hence assertion $(iii)$ of the theorem is derived. Repeating the procedure in the proof of claim $e, \widehat e\in L^p(0, T; H)$ and using the fact $ L_u[x, t]- \varphi\in L^\infty(0, T; H)$ and equality \eqref{StationCond-u2},  we can show that $e, \widehat e\in L^\infty(0, T; H).$ Hence $-L_y[\cdot, \cdot] -g_y[\cdot, \cdot]e\in L^\infty(0, T, H)$.  By the Lemma \ref{Lemma-stateEq}, the solution $\varphi $ of the adjoint equation belongs to $L^\infty(Q)\cap W^{1,1}_2(0, T; D, H)$. Again, we obtain from \eqref{StationCond-u2} that $e, \widehat e \in L^\infty(Q)$.   Hence assertion $(i)$ of the theorem is established. Asertion $(iv)$ of the theorem follows from condition \eqref{dk4} and formulae \eqref{D2-J}, \eqref{D2-F} and \eqref{D2-G}. The proof of the theorem is complete. 
\end{proof}

\medskip

\medskip

To deal with second-order sufficient optimality conditions, we need to enlarge the critical cone $\mathcal C [\bar z]$ by the cone $\mathcal{C}_2[\bar z]$ which consists  of couples $(y, u) \in W^{1, 1}_2(0, T; D, H) \times L^2(Q)$ satisfying the following conditions:
\begin{itemize}
\item [$(c'_1)$] $\displaystyle \int_Q (L_y[x, t]y(x, t) + L_u[x, t]u(x, t))dxdt \le 0$; 

\item [$(c'_2)$] $\dfrac{\partial y}{\partial t} + Ay + f'(\bar y)y = u, y(0) = 0;$ 

\item[$(c'_3)$] $u(x, t)\in T([a, b], \bar u(x, t))$ for a.a. $(x, t)\in Q$; 

\item[$(c'_4)$] $g_y[x, t]y(x, t) + \epsilon u(x, t))\in T((-\infty, 0], g[x, t]+\epsilon \bar u(x, t))$ for a.a. $(x, t) \in Q$.
\end{itemize}

It easy to see that conditions $(c'_3)$ and $(c'_4)$ can be written in the following form:
\begin{align*}
(c_3^{''})  \quad \quad  &(u(x, t), -u(x, t), g_y[x, t]y(x, t) + \epsilon u(x, t)) \\
&\in T((-\infty, 0], \bar u(x, t)-b) \times T((-\infty, 0], a-\bar u(x, t)) \times T((-\infty, 0], g[x, t]+\epsilon \bar u(x, t))
\end{align*} 
 for a.a. $(x, t) \in Q$.  Hence $\mathcal{C}_2[\bar z]$ contains  the closure of $\mathcal{C}_0[\bar z]$ in $W^{1,1}_2(0, T, D, H)\times L^2(0, T; H)$. 
\medskip

In next part,  we use the Lagrange function of the form 
\begin{align}
    \label{Lagrange.function.2}
    \mathcal L (y, u, \varphi, e, \widehat e) &=  \int_Q L(x, t, y, u)  dxdt  +  \int_Q \bigg[\varphi (\frac{\partial y}{\partial t} + Ay + f(y) - u) \bigg] dxdt  \nonumber \\  
    & \quad \quad  +  \int_{\Omega} \varphi (0)[y(0) - y_0] dx  +  \int_Q e[g(y) + \epsilon u] dxdt  +  \int_Q \widehat e u dxdt.
\end{align}
The following theorem gives second-order sufficient conditions for locally optimal solutions to problem \eqref{P1}--\eqref{P5}. 

\begin{theorem} \label{Theorem 2} Suppose that $(H1), (H2), (H3)$ and $(H4)$ are valid, there exist the multipliers $\varphi \in W^{1, 1}_2(0, T; D, H) \cap L^\infty(Q)$ and $e, \widehat e \in L^\infty(Q)$ corresponding to $(\bar y, \bar u)\in\Phi$ satisfies conclusions $(i), (ii)$ and $(iii)$ of Theorem \ref{Theorem 1} and the following strictly second-order condition:
 \begin{align}\label{StrictSOSCond}
\nabla^2_{(y,u)}\mathcal{L}(\bar y, \bar u, \varphi, e, \widehat e)[(y, v), (y, v)]>0\quad \forall (y, v)\in\mathcal{C}_2[(\bar y, \bar u)]\setminus\{(0,0)\}.
\end{align}  
Furthermore, there exists a number $\Lambda > 0$ such that 
\begin{align}
    L_{uu}[x, t] \ge \Lambda \quad {\rm a.e.} \ (x, t) \in Q.
\end{align}
Then there exist number $\epsilon>0$ and $\kappa > 0$ such that 
\begin{align}
    J(y, u)\geq J(\bar y, \bar u) +\kappa \|u-\bar u\|_{L^2(Q)}^2\quad \forall (y, u)\in\Phi\cap[B_Y(\bar y, \epsilon)\times B_U(\bar u, \epsilon)]. 
\end{align}
\end{theorem}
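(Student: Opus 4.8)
I would argue by contradiction, following the classical scheme for second--order sufficient conditions while keeping track of the two--norm gap. Assume the quadratic growth estimate fails. Then for each $k$ there is $(y_k,u_k)\in\Phi$ with $(y_k,u_k)\to(\bar y,\bar u)$ in $Y\times U$, $u_k\neq\bar u$ (hence $y_k\neq\bar y$, since otherwise $J(y_k,u_k)=J(\bar y,\bar u)$), and $J(y_k,u_k)<J(\bar y,\bar u)+\tfrac1k\|u_k-\bar u\|_{L^2(Q)}^2$. Put $\rho_k:=\|u_k-\bar u\|_{L^2(Q)}\to0$, $v_k:=(u_k-\bar u)/\rho_k$ (so $\|v_k\|_{L^2(Q)}=1$) and $w_k:=(y_k-\bar y)/\rho_k$. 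Subtracting the state equations and writing $f(y_k)-f(\bar y)=f'(\xi_k)(y_k-\bar y)$ with $\xi_k$ between $y_k$ and $\bar y$, the $w_k$ solve $\partial_t w_k+Aw_k+f'(\xi_k)w_k=v_k$, $w_k(0)=0$, with $\xi_k\to\bar y$ in $L^\infty(Q)$. The a priori estimates of Lemmas \ref{Lemma-stateEq} and \ref{Lemma-LinearizedEq} show $(w_k)$ is bounded in $W^{1,1}_2(0,T;D,H)$, so by the Aubin--Lions compact embedding $W^{1,1}_2(0,T;D,H)\hookrightarrow L^2(0,T;V)$ and reflexivity I may pass to a subsequence with $v_k\rightharpoonup v$ in $L^2(Q)$, $w_k\to w$ strongly in $L^2(Q)$ and $w_k\rightharpoonup w$ in $W^{1,1}_2(0,T;D,H)$; passing to the limit in the linearized equation shows $(w,v)$ satisfies $(c_2')$.

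The next step is to verify $(w,v)\in\mathcal C_2[\bar z]$. I would expand $J(y_k,u_k)-J(\bar y,\bar u)$, $u_k-\bar u$ and $(g(y_k)+\epsilon u_k)-(g[\cdot,\cdot]+\epsilon\bar u)$ by the mean value theorem with pointwise intermediate arguments, divide by $\rho_k$ and let $k\to\infty$; those arguments tend to $\bar z$ in $L^\infty(Q)$, so the coefficients $L_y,L_u,g_y$ converge in $L^\infty(Q)$ by $(H2)$--$(H3)$, and together with $w_k\to w$ in $L^2(Q)$ and $v_k\rightharpoonup v$ in $L^2(Q)$ this gives: $\int_Q(L_y[x,t]w+L_u[x,t]v)\,dxdt\le0$ from the contradiction hypothesis, i.e.\ $(c_1')$; $v_k\ge0$ on $Q_a$ and $v_k\le0$ on $Q_b$ (from $a\le u_k\le b$), which persist in the weak limit because the relevant one--sided cones in $L^2(Q)$ are convex and strongly, hence weakly, closed, so $(c_3')$ holds for $v$; and $g_y(\cdot)w_k+\epsilon v_k\le0$ on $Q_0$ (from feasibility and $g[\cdot,\cdot]+\epsilon\bar u=0$ there), which likewise passes to the limit, giving $(c_4')$. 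Thus $(w,v)\in\mathcal C_2[\bar z]$.

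The heart of the argument is a sandwich on the Lagrangian \eqref{Lagrange.function.2}. On feasible points $F=0$, so, using the complementarity in Theorem \ref{Theorem 1}$(iii)$ at $\bar z$, $\mathcal L(y_k,u_k,\varphi,e,\widehat e)-\mathcal L(\bar y,\bar u,\varphi,e,\widehat e)=\big(J(y_k,u_k)-J(\bar y,\bar u)\big)+\int_Q e[g(y_k)+\epsilon u_k]\,dxdt+\int_Q\widehat e(u_k-\bar u)\,dxdt$; since $e\ge0$ and $g(y_k)+\epsilon u_k\le0$ the first integral is $\le0$, and since $\widehat e=e_1-e_2$ with $e_1\ge0$ supported on $Q_b$ (where $u_k-\bar u\le0$) and $e_2\ge0$ supported on $Q_a$ (where $u_k-\bar u\ge0$) the second is also $\le0$, so $\mathcal L(y_k,u_k,\varphi,e,\widehat e)-\mathcal L(\bar y,\bar u,\varphi,e,\widehat e)\le J(y_k,u_k)-J(\bar y,\bar u)<\tfrac1k\rho_k^2$. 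On the other hand, $(i)$ and $(ii)$ are exactly $\nabla_{(y,u)}\mathcal L(\bar y,\bar u,\varphi,e,\widehat e)=0$ (verified via \eqref{IntegralByPart}), and $\mathcal L(\cdot,\cdot,\varphi,e,\widehat e)$ is $C^2$ near $\bar z$ on $Y\times U$; Taylor's formula with Lagrange remainder gives $\mathcal L(y_k,u_k,\varphi,e,\widehat e)-\mathcal L(\bar y,\bar u,\varphi,e,\widehat e)=\tfrac12\,\nabla^2_{(y,u)}\mathcal L(z_k^\theta,\varphi,e,\widehat e)[(y_k-\bar y,u_k-\bar u)]^2$ for an intermediate point $z_k^\theta$. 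Dividing by $\rho_k^2$, I would isolate $\int_Q L_{uu}(z_k^\theta)v_k^2$, note that all coefficients converge in $L^\infty(Q)$ to $L_{yy}[x,t],L_{yu}[x,t],L_{uu}[x,t],\varphi f''(\bar y),e\,g_{yy}[x,t]$, that the terms quadratic in $w$ and the cross term converge (strong $L^2$--convergence of $w_k$, weak of $v_k$), and that $\liminf_k\int_Q L_{uu}(z_k^\theta)v_k^2\ge\int_Q L_{uu}[x,t]v^2+\Lambda\big(1-\|v\|_{L^2(Q)}^2\big)$ via the identity $\int L_{uu}v_k^2=\int L_{uu}(v_k-v)^2+2\int L_{uu}v(v_k-v)+\int L_{uu}v^2$ together with $L_{uu}\ge\Lambda$ and $\|v_k-v\|_{L^2(Q)}^2\to1-\|v\|_{L^2(Q)}^2$. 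This yields $0\ge\tfrac12\big(\nabla^2_{(y,u)}\mathcal L(\bar y,\bar u,\varphi,e,\widehat e)[(w,v)]^2+\Lambda(1-\|v\|_{L^2(Q)}^2)\big)$. If $(w,v)=(0,0)$ this is $0\ge\Lambda/2>0$; if $(w,v)\neq(0,0)$ then $(w,v)\in\mathcal C_2[\bar z]\setminus\{(0,0)\}$ by the previous step, \eqref{StrictSOSCond} makes the first summand strictly positive, and $\|v\|_{L^2(Q)}\le\liminf\|v_k\|_{L^2(Q)}=1$ makes the second nonnegative. Either way we reach a contradiction, which proves the growth estimate.

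I expect the main obstacle to be the two--norm discrepancy: $J$ and $\mathcal L$ are of class $C^2$ only for the $Y\times L^\infty(Q)$ topology, whereas the growth is quantified in $L^2(Q)$. Making the limit in the quadratic form rigorous requires that $\nabla^2_{(y,u)}\mathcal L(\bar z,\varphi,e,\widehat e)[\cdot]^2$ extend continuously to $W^{1,1}_2(0,T;D,H)\times L^2(Q)$ and that the Taylor remainder be controlled uniformly --- this is precisely where the Lipschitz bounds in $(H2)$--$(H3)$, together with $N\le3$ and the Sobolev embeddings of $W^{1,1}_2(0,T;D,H)$, are used --- and it also requires that the limiting direction $(w,v)$ lie in the \emph{enlarged} cone $\mathcal C_2[\bar z]$ rather than only in $\mathcal C[\bar z]$, which is the very reason for its introduction. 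The coercivity hypothesis $L_{uu}\ge\Lambda$ is what converts the possibly lost $L^2$--mass $1-\|v\|_{L^2(Q)}^2$ of the minimizing sequence into a strictly positive contribution, which is what makes the contradiction go through when $v$ does not carry the full unit mass.
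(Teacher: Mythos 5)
Your proposal is correct and follows essentially the same route as the paper: contradiction via normalized sequences $t_n=\|u_n-\bar u\|_{L^2}$, Aubin--Lions compactness to extract a limit direction in the enlarged cone $\mathcal C_2[\bar z]$, the Lagrangian sandwich using complementarity, and the coercivity $L_{uu}\ge\Lambda$ to rule out loss of $L^2$-mass. The only (cosmetic) difference is that the paper first concludes $(z,v)=(0,0)$ from weak lower semicontinuity of $\xi\mapsto\int_Q L_{uu}\xi^2$ together with \eqref{StrictSOSCond} and then separately derives $\Lambda\le 0$, whereas you fold both steps into a single estimate via the decomposition $\int L_{uu}v_k^2=\int L_{uu}(v_k-v)^2+2\int L_{uu}v(v_k-v)+\int L_{uu}v^2$.
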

\begin{proof} Suppose to the contrary that the conclusion were
false. Then, we could find sequences $\{(y_n, u_n)\}\subset \Phi$ and $\{\gamma_n\}\subset \mathbb{R}_+$ such that $y_n\to \bar y$ in $Y$, $u_n\to \bar u$ in $U$,  $\gamma_n\to 0^+$ and
\begin{equation}\label{Iq1}
J(y_n, u_n)< J(\bar y, \bar u) +\gamma_n\|u_n-\bar u\|_{L^2(Q)}^2. 
\end{equation} If $u_n=\bar u$ then by the uniqueness we have $y_n=\bar y$. This leads to  $J(\bar y, \bar u)< J(\bar y, \bar u) + 0$ which is absurd. Therefore, we can assume that $u_n\ne \bar u$ for  all $n\geq 1$.

Define $t_n=\|u_n- \bar u\|_{L^2(Q)}$, $z_n= \frac{y_n-\bar y}{t_n}$ and $v_n= \frac{u_n-\bar u}{t_n}$.
Then $t_n\to 0^+$ and $\|v_n \|_{L^2(Q)}=1$. By the reflexivity of  $L^2(Q)$, we may assume that $v_n \rightharpoonup v$ in $L^2(Q)$. From the above, we have
\begin{equation}\label{KeyIq2}
J(y_n, u_n)- J(\bar y, \bar u)\leq t_n^2 \gamma_n \leq o(t_n^2).
\end{equation} We want to show that $z_n$ converges weakly to some
$z$ in $W^{1, 1}_2(0, T; D, H)$. Since $(y_n,
u_n)\in\Phi$ and $(\bar y, \bar u)\in\Phi$, we have
\begin{align*}
&\frac{\partial y_n}{\partial t}+ Ay_n +f(y_n) = u_n,\ y_n(0)=y_0\\
&\frac{\partial \bar y}{\partial t} +A\bar y + f(\bar y)=\bar u,\ \bar y_n(0)= y_0.
\end{align*} This implies that
\begin{align}\label{Zn}
     \frac{\partial (y_n-\bar y)}{\partial t}+ A(y_n-\bar y) +f(y_n)-f(\bar y) = u_n-\bar u,\ (y_n-\bar y)(0)=0
\end{align}
 By Taylor's expansion, we have 
$$
f(y_n)-f(\bar y)=f'(\bar y +\theta(y_n-\bar y)) (y_n-\bar y),\ 0\leq\theta\leq 1. 
$$ Note that $y_n\to \bar y$ in $L^\infty(Q)$ so there exists $M>0$ such that $\|\bar y +\theta(y_n-\bar y)\|_{L^\infty(Q)}\leq M$. By $(H2)$, there exists $k_M>0$ such that 
$$
|f'(\bar y +\theta(y_n-\bar y))|\leq k_M|\bar y +\theta(y_n-\bar y))| +|f'(0)|. 
$$ Hence $f'(\bar y +\theta(y_n-\bar y))\in L^\infty(Q)$. From \eqref{Zn}, we get
\begin{align}\label{Zn2}
     \frac{\partial z_n}{\partial t}+ Az_n +f'(\bar y+\theta (y_n-\bar y))z_n = v_n, \  z_n(0)=0.
\end{align} By \cite[Theorem 5, p.360]{Evan}, there exists a constant $C>0$ such that 
$$
\|z_n\|_{W^{1,1}_2(0,T; D, H)}\leq  C\|v_n\|_{L^2(Q)}=C.
$$ Hence $\{z_n\}$ is bounded in $W^{1, 1}_2(0,T; D, H)$. Without loss of generality, we may assume that $z_n\rightharpoonup z$ in $W^{1, 1}_2(0,T, D, H)$. By Aubin's lemma, the embedding $W^{1, 1}_2(0, T; D, H)\hookrightarrow L^2([0, T], V)$ is compact. Therefore, we may assume that $z_n\to z$ in $L^2([0, T], V)$. Particularly, $z_n\to z$ in $L^2(Q)$. We now  use the procedure in the proof of \cite[Theorem 3, p. 356]{Evan}. By  passing to the limit,   we obtain from  \eqref{Zn2} that
\begin{align}\label{Z-Eq-C2}
     \frac{\partial z}{\partial t}+ Az +f'(\bar y)z = v,\ z(0)=0.
\end{align}
 Next we claim that $(z, v)\in\mathcal{C}_2[(\bar y, \bar u)]$. Indeed, by a Taylor expansion, we have from \eqref{KeyIq2} that
\begin{equation}\label{CriticalCon1}
J_y(\bar y, \bar u)z_n + J_u(\bar y, \bar u)v_n +\frac{o(t_n)}{t_n}\leq 
\frac{o(t^2_n)}{t_n}.
\end{equation} Note that $L_{u}[\cdot,\cdot]\in L^\infty(Q)$  and
$J_u(\bar y, \bar u) \colon L^2(Q)\to\mathbb{R}$ is a continuous linear mapping, where
$$
\langle J_u(\bar y, \bar u), u\rangle:=\int_Q L_u[x,t] u(x,t)dx dt\ \quad
\forall u\in L^2(Q).
$$ By \cite[Theorem 3.10]{Brezis1}, $J_u(\bar y, \bar u)$ is weakly continuous on $L^2(Q)$. On the other hand, $L_y[\cdot, \cdot]\in L^\infty (Q)$. Hence 
$\lim_{n\to \infty} \int_Q L_y[x,t] z(x,t)dxdt=\int_Q L_y[x,t] z(x,t)dxdt$.  Letting $n\to\infty$  we obtain from \eqref{CriticalCon1} that
\begin{align}\label{c1}
\int_Q(L_y[x,t]z(x,t)+ L_u[x,t]v(x,t))dxdt\leq 0. 
\end{align} Hence conditions $(c_1')$  and $(c_2')$ of  $\mathcal{C}_2[(\bar y, \bar u)]$ are valid.  It remains to verify  $(c_3')$ and $(c_4')$. Let
\begin{align*}
    G(y, u)(x, t) = \bigg(G_1(y, u)(x, t), \  G_2(y, u)(x, t)\bigg),
\end{align*} where $G_1(y, u)=u$ and $G_2(y, u)= g(\cdot, \cdot, y) +\epsilon u$. Since $(y_n, u_n)\in\Phi$, we have $G_i(y_n, u_n) \in K_i$  with $n = 1, 2$ and 
\begin{align*}
    &K_1=\{v\in L^2(Q): a\leq  v(x,t)\leq b\quad \text{a.a.}\quad (x, t)\in Q\},\\
    &K_2=\{v\in L^2(Q): v(x, t)\leq 0\quad {\rm a.a.}\ (x,t)\in Q\}.
\end{align*}
By the Taylor expansion and the definitions of $z_n, v_n$, we have
\begin{align*}
    G_{iy}(\bar z)z_n + G_{iu}(\bar z)v_n  + \frac{o(t_n)}{t_n}& \in \frac{1}{t_n} \big(K_i - G_i(\bar y, \bar u) \big) 
    \subseteq {\rm cone}(K_i - G_i(\bar y, \bar u))  
    \subseteq T_{L^2(Q)}(K_i; G_i(\bar y, \bar u))
\end{align*}
where $T_{L^2(Q)}(K_i; G_i(\bar y, \bar u))$ is the tangent cone to $K_i$ at $G_i(\bar y, \bar u)$ in $L^2(Q)$. Since $T_{L^2}(K_i; G_i(\bar y, \bar u))$ is a closed convex subset of $L^2(Q)$, it is a weakly closed set of $L^2(Q)$ and so $T_{L^2}(K_i; G_i(\bar y, \bar u))$ is sequentially weakly closed. By a simple argument, we can show that $G_{iy}(\bar z)z_n + G_{iu}(\bar z)v_n    \rightharpoonup \   G_{iy}(\bar z)z + G_{iu}(\bar z)v $ in $L^2(Q)$. Letting $n\to\infty$, we obtain from the above that 
\begin{align}
    G_{iy}(\bar z)z + G_{iu}(\bar z)v \in T_{L^2}(K_i; G_i(\bar y, \bar u)).
\end{align} By Lemma 2.4 in \cite{Kien-2017}, we have 
$$
T_{L^2}(K_1; G_1(\bar y, \bar u))=\{ v\in L^2(Q): v(x, t)\in T([a, b]; \bar u(x,t))\quad {\rm a.a.}\quad (x, t)\in Q\}.
$$ and 
$$
T_{L^2}(K_2; G_2(\bar y, \bar u))=\{ v\in L^2(Q): v(x, t)\in T((-\infty, 0]; G_2[x,t])\quad {\rm a.a.}\ (x, t)\in Q\}. 
$$ It follows that 
$$
v(x,t)\in T([a, b]; \bar u(x, t))\quad \text{a.a.}\quad (x, t)\in Q
$$ and 
$$
g_y[x, t] z(x,t) + \epsilon v(x,t)\in T((-\infty, 0]; g[x,t]+\epsilon \bar u(x, t))\quad \text{a.a.}\quad (x, t)\in Q
$$ Hence $(z, v)$ satisfies  $(c_3')$ and $(c_4')$. Consequently, $(z,v)\in\mathcal{C}_2[(\bar y, \bar u)]$ and the claim is justified.  We now show that $(z, v)=0$.  
 Indeed, it follows from the assertion $(iii)$ of Theorem {\ref{Theorem 1}} that 
 \begin{align*}
     \int_Q e(g(y_n) + \epsilon u_n - g(\bar y) - \epsilon \bar u) dxdt \le  0 \quad \text{and} \quad \int_Q \widehat e (u_n - \bar u) dxdt \leq 0.
 \end{align*} This and definition of $\mathcal{L}$ in (\ref{Lagrange.function.2}), yield 
 \begin{align*}
    & \mathcal L (y_n, u_n, \varphi, e, \widehat e)  -  \mathcal L (\bar y, \bar u, \varphi, e, \widehat e)\\
     &\le J(y_n, u_n)-J(\bar y, \bar u) \nonumber + \int_Q\varphi\big(\frac{\partial (y_n-y)}{\partial t} +A(y_n-\bar y) + f(y_n)-f(\bar y)- (u_n-\bar u)\big)  dxdt  \nonumber \\ 
     &+  \int_{\Omega} \varphi(x, 0)[y_n(x, 0) - \bar y(x, 0)]  dx.
 \end{align*}
 Combining this with (\ref{Zn}) and (\ref{KeyIq2}), we obtain 
 \begin{align}
     \mathcal L (y_n, u_n, \varphi, e, \widehat e)  -  \mathcal L (\bar y, \bar u, \varphi, e, \widehat e)  \le J(y_n, u_n)-J(\bar y, \bar u) \le o(t_n^2).
 \end{align}
 Using a Taylor expansion for $\mathcal{L}$ and noting that $\nabla_{(y,u)}\mathcal{L}(\bar y, \bar u, \varphi, e, \widehat e)=0$, we get 
$$
\frac{t_n^2}{2}\nabla^2_{(y,u)}\mathcal{L}(\bar y, \bar u, \varphi, e, \widehat e)[(z_n, v_n), (z_n, v_n)] +o(t_n^2)\leq o(t_n^2),
$$ or, equivalently,
\begin{equation}\label{IneqL1}
\nabla^2_{(y,u)}\mathcal{L}(\bar y, \bar u, \varphi, e, \widehat e)[(z_n, v_n), (z_n, v_n)] \leq \frac{o(t_n^2)}{t_n^2}.
\end{equation} 
This means
\begin{align}\label{IneqL2}
\int_Q (L_{yy}[x,t]z_n^2 + 2L_{yu}[x,t]z_n v_n + eg_{yy}[x,t]z_n^2  +\varphi f''(\bar y) z_n^2)dxdt 
+\int_Q L_{uu}[x,t]  v_n^2 dxdt \leq \frac{o(t_n^2)}{t_n^2}.
\end{align}
Since $L_{uu}[x,t]  \ge \Lambda>0$, the functional 
\begin{align*}
    \xi \mapsto  \int_Q L_{uu}[x,t]  \xi^2 dxdt
\end{align*}
is convex and so it is sequentially lower semicontinous. Hence 
$$
\lim_{n\to\infty} \int_Q L_{uu}[x,t]  v_n^2 dxdt\geq \int_Q L_{uu}[x,t]  v^2 dxdt. 
$$ 
 By letting $n\to\infty$ and using the fact that $z_n\to z$ in $L^2(Q)$, we obtain from \eqref{IneqL2} that 
$$
 \nabla^2_{(y,u)}\mathcal{L}(\bar y,\bar u, \varphi, e, \widehat e)[(z, v), (z, v)] \leq 0.
$$ Combining this with \eqref{StrictSOSCond}, we conclude that $(z, v)=(0, 0)$.

Finally, from \eqref{IneqL2} and $\|v_n\|_{L^2(Q)}=1$, we have 
\begin{align*}
&\int_Q \big(L_{yy}[x,t]z_n^2 + 2L_{yu}[x,t]z_n v_n +\varphi f''(\bar y) z_n^2\big)dxdt  +  \int_Qeg_{yy}[x,t]z_n^2 dxdt  +\Lambda \\
&\le \int_Q \big(L_{yy}[x,t]z_n^2 + 2L_{yu}[x,t]z_n v_n +\varphi f''(\bar y) z_n^2\big)dxdt  +  \int_Qeg_{yy}[x,t]z_n^2 dxdt  +\int_Q L_{uu}[x,t]  v_n^2 dxdt\\
&\leq \frac{o(t_n^2)}{t_n^2}.
\end{align*} 
By letting $n\to\infty$ and using $(z, v)=(0, 0)$, we get $\Lambda\leq 0$ which is impossible. Therefore, the proof of  theorem is complete.   
\end{proof}

\section{H\"{o}lder continuity of multipliers and optimal solutions}

In this section we show that under strengthen hypotheses on $y_0$, $L, g$ and $\partial\Omega$, then the multipliers and optimal solution are H\"{o}lder continuous. We require the following assumptions. 

\noindent $(H3')$ $L$ is a continuous  function and  for each $(x,t)\in \Omega\times [0, T]$, $L(x, t, \cdot, \cdot)$ is of class $C^2$ and satisfies the following property:  for each $M>0$, there exists $k_{L,M}>0$ such that 
\begin{align*}
    &|L(x_1, t_1, y_1, u_1)-L(x_2, t_2, y_2, u_2)|+|L_y(x_1, t_1, y_1, u_1)-L_y(x_2, t_2, y_2, u_2)|\\
    &+|L_u(x_1, t_1, y_1, u_1)- L_u(x_2, t_2, y_2, u_2)| \leq k_{L, M} (|x_1-x_2|+ |t_1-t_2|+ |y_1-y_2|+ |u_1-u_2|),\\
    &{\rm and}\\ 
    &|L_{yy}(x, t, y_1, u_1)-L_{yy}(x, t, y_2, u_2)|+|L_{yu}(x, t, y_1, u_1)-L_{yu}(x, t, y_2, u_2)|\\
    &+ |L_{uu}(x, t, y_1, u_1)-L_{uu}(x, t, y_2, u_2)|\leq k_{L,M}(|y_1-y_2|+ |u_1-u_2|)
\end{align*}  for all $(x, t), (x_i, t_i)\in Q$ and $(y_i, u_i)\in \mathbb{R}\times\mathbb{R}$ satisfying $|y_i|, |u_i|\leq M$ with $i=1,2$.  $g$ is a continuous  function and  for each $(x,t)\in \Omega\times [0, T]$, $g(x, t, \cdot)$ is of class $C^2$ and satisfies the following property:  for each $M>0$, there exists $k_{g,M}>0$ such that 
\begin{align*}
    &|g(x_1, t_1, y_1)-g(x_2, t_2, y_2)|+|g_y(x_1, t_1, y_1)-g_y(x_2, t_2, y_2)|\\
    &\quad \quad \quad \quad +|g_u(x_1, t_1, y_1)- g_u(x_2, t_2, y_2)| \leq k_{g, M} (|x_1-x_2|+ |t_1-t_2|+ |y_1-y_2|),\\
    &{\rm and}\\ 
    &|g_{yy}(x, t, y_1)-g_{yy}(x, t, y_2)|+|g_{yu}(x, t, y_1)-g_{yu}(x, t, y_2)|\\
    &\quad \quad \quad \quad + |g_{uu}(x, t, y_1)-g_{uu}(x, t, y_2)|\leq k_{g,M}|y_1-y_2|
\end{align*}  for all $(x, t), (x_i, t_i)\in Q$ and $y_i\in \mathbb{R}$ satisfying $|y_i|\leq M$ with $i=1,2$.

\noindent $(H4')$   $f_y[x, t]\geq -\frac{1}{\epsilon} g_y[x, t]\geq 0$ for a.a. $(x, t) \in Q$  and there exists a number $\gamma>0$ such that
\begin{align}
\label{dk.tach.1}
    a -\bar u(x, t) +  g[x, t] + \epsilon\bar u(x,t) \leq - \gamma \quad {\rm a.a.} \  (x, t) \in Q.
\end{align}
 Moreover, the following condition is satisfied  
\begin{align}\label{dk.tach.manh}
    g[x, t] +\epsilon b < 0 \quad {\rm on} \quad \{ (x, t) \in Q: \bar u(x, t) = b \}.
\end{align}

\noindent $(H5')$ The boundary $\partial\Omega$ satisfies the property of positive geometric density, that is, there exist number $\alpha^*\in (0, 1)$ and $R_0>0$ such that for all $x_0\in\partial \Omega$ and all $R\leq R_0$, one has
$$
|\Omega\cap B(x_0, R)|\leq (1-\alpha^*) |B(x_0, R)|,
$$ where $|C|$ denote Lebesgue measure of a measurable set $C$ in $\mathbb{R}^N$.

\noindent $(H6')$ There exists a number $\gamma_1 > 0$ such that 
\begin{align}
    L_{uu}(x, t, y, u) \ge \gamma_1  \quad \quad  \forall (x, t, y, u) \in Q \times \mathbb R \times \mathbb R.
\end{align}

    Hypothesis $(H5')$ is  important for the H\"{o}lder continuity of solutions to the semilinear parabolic equations (see \cite{E.Di.Benedetto}).  By using the theorem of supporting hyperplane \cite[Theorem 1.5, p.19]{H.Tuy}, we can show that if $\Omega \subset \mathbb R^N$ is a nonempty convex set, then  $\partial\Omega$ satisfies $(H5')$ with $\alpha^*=1/2$ and $R_0>0$.

The following theorem gives regularity of $(\bar y, \bar u)$ and multipliers. 

\begin{theorem} \label{Theorem-Regularity} Suppose $y_0\in H^2(\Omega)\cap H_0^1(\Omega)$, hypotheses $(H1), H(2), (H3')-(H6')$, and $(\bar y, \bar u) \in \Phi$ is a locally optimal solution of problem \eqref{P1}-\eqref{P5}. Then  there exist  multipliers $\varphi, e$ and $\widehat e$  which satisfy conditions $(i)-(iv)$ of Theorem \ref{Theorem 1} and  $\bar y, \bar u, \varphi, e$ and $\widehat e$ are H\"{o}lder continuous on $\bar Q$.
\end{theorem}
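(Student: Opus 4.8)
The plan is to establish H\"{o}lder continuity by a bootstrapping argument that alternates between the state equation, the adjoint equation, and the algebraic stationarity condition $(ii)$ of Theorem \ref{Theorem 1}, upgrading the regularity of $\bar u$, $\bar y$, $\varphi$, $e$ and $\widehat e$ step by step. First I would invoke Theorem \ref{Theorem 1} to obtain multipliers $\varphi \in W^{1,1}_2(0,T;D,H)\cap L^\infty(Q)$ and $e,\widehat e\in L^\infty(Q)$ satisfying $(i)$--$(iv)$; note that $(H3')$, $(H4')$, $(H6')$ are strengthenings of $(H3)$, $(H4)$ and of the coercivity hypothesis, so Theorem \ref{Theorem 1} applies. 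Because $y_0\in H^2(\Omega)\cap H^1_0(\Omega)$ is now assumed, the state $\bar y$ has improved initial regularity, which combined with $(H5')$ (positive geometric density of $\partial\Omega$) will allow us to invoke the interior--and--boundary H\"{o}lder estimates for quasilinear parabolic equations of DiBenedetto type (cited as \cite{E.Di.Benedetto}) once the right-hand sides are known to lie in the appropriate $L^p(0,T;L^r(\Omega))$ spaces with $p,r$ large.

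The core of the argument is the following iteration. Step 1: from $(ii)$ we have $L_u[x,t]-\varphi+\epsilon e+\widehat e=0$ a.e.; using the complementarity structure in $(iii)$ together with the strong separation condition \eqref{dk.tach.manh} and coercivity $L_{uu}\ge\gamma_1$ from $(H6')$, one can solve this relation pointwise for $\bar u$ in terms of $\varphi$ (and of $x,t,\bar y$) via an implicit-function / projection formula: on the set where the mixed constraint is inactive, $\widehat e$ is the projection residual onto $[a,b]$, while where $g[x,t]+\epsilon\bar u=0$ the value of $\bar u$ is pinned by $g$. This yields a Lipschitz dependence $\bar u = S(x,t,\bar y,\varphi)$, hence $\bar u$ inherits whatever modulus of continuity $\bar y$ and $\varphi$ have, plus the Lipschitz-in-$(x,t)$ contribution from $(H3')$. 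Step 2: feed this $\bar u$ into the state equation \eqref{P2}--\eqref{P3}; with $y_0\in H^2\cap H^1_0$ and $u=\bar u\in L^\infty(Q)$ and $f$ locally Lipschitz with $f(0)=0$, Lemma \ref{Lemma-stateEq} and parabolic regularity plus $(H5')$ give $\bar y\in C^{0,\alpha}(\bar Q)$ for some $\alpha\in(0,1]$. Step 3: the right-hand side of the adjoint equation $(i)$ is $-L_y[\cdot,\cdot]-e\,g_y[\cdot,\cdot]$; since $e\in L^\infty(Q)$ and $L_y,g_y$ are Lipschitz in their arguments (by $(H3')$) and $\bar y\in C^{0,\alpha}$, this right-hand side is at least bounded, and after one pass becomes H\"{o}lder; applying the same DiBenedetto-type estimate to the (time-reversed) adjoint equation, as already done inside the proof of Theorem \ref{Theorem 1}, we obtain $\varphi\in C^{0,\alpha}(\bar Q)$. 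Step 4: re-enter Step 1 with the now-H\"older $\bar y$ and $\varphi$: the formula $\bar u=S(x,t,\bar y,\varphi)$ shows $\bar u\in C^{0,\alpha}(\bar Q)$, and then \eqref{dk.tach.manh}, $(iii)$ and the algebraic identity force $e=-\tfrac1\epsilon(\,\widehat e-\varphi+L_u[\cdot,\cdot]\,)$ restricted appropriately, giving $e,\widehat e\in C^{0,\alpha}(\bar Q)$ as well. One last pass of Step 2 (now with H\"{o}lder $\bar u$) and Step 3 closes the loop at a common H\"older exponent, possibly after taking the minimum of finitely many exponents produced along the way.

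The main obstacle I expect is Step 1 and Step 4: turning the stationarity relation $(ii)$ together with the complementarity conditions $(iii)$ into an honest pointwise Lipschitz (hence H\"older-preserving) formula for $\bar u$, $e$, $\widehat e$. This requires carefully partitioning $Q$ into the regions $Q_a$, $Q_b$, $Q_{ab}$, $Q_0$ of \eqref{Qa}--\eqref{Q0} and arguing that on each region exactly one of the multipliers is "free" and determined smoothly by the others; the separation hypotheses $(H4')$ and especially the strict condition \eqref{dk.tach.manh} are precisely what guarantee that these regions are, up to H\"{o}lder-continuous boundaries, mutually consistent and that no degeneracy ($e$ and $\widehat e$ both nonzero, or $\bar u$ simultaneously at a box bound and on the mixed-constraint surface) occurs. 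The coercivity $L_{uu}\ge\gamma_1$ from $(H6')$ provides the strong monotonicity needed to invoke the implicit function theorem (or a direct Lipschitz estimate for solutions of the scalar variational inequality in $u$) uniformly in $(x,t)$. The parabolic H\"{o}lder regularity inputs (Steps 2 and 3) are essentially citations to \cite{E.Di.Benedetto} and \cite{Ladyzhenskaya-1988} once $(H5')$ is in force, so they are routine; it is the algebraic decoupling of the multiplier system that carries the real content.
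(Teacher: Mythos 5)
Your proposal assembles exactly the ingredients of the paper's proof --- the DiBenedetto H\"older estimate under $(H5')$ for the state and the time-reversed adjoint equation, the partition of $Q$ into $Q_a$, $Q_b$, $Q_{ab}$, $Q_0$ driven by \eqref{dk.tach.1} and \eqref{dk.tach.manh} to decouple the multipliers, and the coercivity $L_{uu}\ge\gamma_1$ to transfer H\"older continuity to $\bar u$ --- but it organizes them as a circular bootstrap that is in fact unnecessary. The result of \cite{E.Di.Benedetto} gives H\"older continuity of a \emph{bounded} weak solution from merely bounded (measurable) data, and Theorem \ref{Theorem 1} already supplies $\bar u, e, \widehat e\in L^\infty(Q)$; hence $\bar y\in C^{0,\alpha}(\bar Q)$ and $\varphi\in C^{0,\alpha}(\bar Q)$ follow in a single pass, with no need for the right-hand sides to ``become H\"older'' first and no iteration of exponents. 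The paper then reverses your order for the remaining unknowns: it first shows $e$ and $\widehat e$ are H\"older on each region --- where this is easy because on $Q_a$, $Q_b$ the control is pinned to the constants $a$, $b$ and on $Q_0$ it is pinned to $-g[\cdot,\cdot]/\epsilon$, so $L_u[\cdot,\cdot]$ is H\"older there without knowing anything about $\bar u$ globally --- and only afterwards deduces H\"older continuity of $\bar u$ from the elementary strong-monotonicity estimate $\gamma_1|\bar u(x_1,t_1)-\bar u(x_2,t_2)|\le |L_u(x_1,t_1,\bar y(x_1,t_1),\bar u(x_1,t_1))-L_u(x_1,t_1,\bar y(x_1,t_1),\bar u(x_2,t_2))|$, with $L_u[\cdot,\cdot]=\varphi-(\epsilon e+\widehat e)$. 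This avoids having to construct and verify Lipschitz continuity of your global solution map $S(x,t,\bar y,\varphi)$, which is the one step of your outline you would still have to make precise (and which, as you note, is where the real work lies). So: same proof in substance, but the linear ordering ($\bar y,\varphi$ first, then $e,\widehat e$ region by region, then $\bar u$) removes both the iteration and the implicit-function construction.
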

\begin{proof} By the Sobolev inequality (see \cite[Theorem 6, p. 270]{Evan}) we have
$y_0\in C^{0, \alpha}(\Omega)\cap H_0^1(\Omega)$ for some $\alpha\in (0, 1]$.  Since implications $(H3')\Rightarrow (H3)$ and $(H4')\Rightarrow (H4)$ are valid, the existence of multipliers $\varphi$ and $e$ follows from Theorem \ref{Theorem 1}.   It remains to show that $\bar y, \bar u, \varphi$ and $e$ are H\"{o}lder continuous.  

\noindent \textbf{Step 1.}  Showing that $\bar y$ and $\varphi$ are H\"{o}lder continuous functions.\\
    Since $\bar y \in L^\infty(Q)$, $\bar y$ is as a bounded solution of the parabolic (1.2)-(1.3). From this, $y_0\in C^{0, \alpha}(\bar \Omega)$ and the assumption $(H5')$, according to \cite[Corollary 0.1]{E.Di.Benedetto},  $\bar y$ is $\alpha-$H\"{o}lder continuous on $\bar Q$ with exponent $\alpha \in (0,1)$. By setting $\widehat \varphi(t) = \varphi(T - t)$, then $\widehat \varphi$ is unique solution of equation
    \begin{align}
        \frac{\partial \widehat \varphi (t)}{\partial t} + A^* \widehat \varphi(t) + f'(\bar y (x, T - t)) \widehat \varphi (t) = - L_y[x, T-t] - e(x, T-t)g_y[x, T-t], \quad  \widehat \varphi (., 0) = 0.
    \end{align}
    By the same argument, we have from \cite[Corollary 0.1]{E.Di.Benedetto} that $\widehat \varphi$ is also $\alpha-$H\"{o}lder continuous on $\bar Q$. It follows that   $\varphi$ is $\alpha-$H\"{o}lder continuous.

\noindent \textbf{Step 2.} Showing that $e$ and $\widehat e$ are H\"{o}lder continuous. 

Let $Q_a, Q_b$ and $Q_0$ be defined by \eqref{Qa}, \eqref{Qb} and \eqref{Q0}, respectively.  Since $e(x, t)=\widehat e(x,t)=0$ on $Q\setminus (Q_a\cup Q_b\cup Q_0)$, we only  prove that $e$ and $\widehat e$ are H\"{o}lder continuous on $Q_a, Q_b$ and $Q_0$.  

\noindent $\bullet$  On $Q_a$ we have $\bar u(x, t) = a$. By \eqref{dk.tach.1}, we have $g[x,t] +\epsilon\bar u(x, t)  \le - \gamma <0$. Hence  $e(x, t)=0$ and so $e$ is H\"{o}lder continuous on $Q_a$.  From $(ii)$ of Theorem \ref{Theorem 1} we have
\begin{align}
    \widehat e= \varphi - L_u[., .].
\end{align}
By Step 1,  $\varphi$ is H\"{o}lder continuous function on $Q$. Let us claim that $L_u[., .]$ is H\"{o}lder continuous function on $Q_a$. Indeed, by $(H3')$, for any $(x_1, t_1), (x_2, t_2) \in Q_a$,   we have
\begin{align}
    &|L_u(x_1, t_1, \bar y(x_1, t_1), \bar u(x_1, t_1)) - L_u(x_2, t_2, \bar y(x_2, t_2), \bar u(x_2, t_2)) |  \nonumber \\
    &= |L_u(x_1, t_1, \bar y(x_1, t_1), a) - L_u(x_2, t_2, \bar y(x_2, t_2), a) | \nonumber \\
    &\le k_{L, M}(|x_1 - x_2| + | t_1 - t_2| + | \bar y(x_1, t_1) - \bar y(x_2, t_2)|). \label{L.e1}
\end{align} Therefore, $L_u[., .]$ is H\"{o}lder continuous function on $Q_a$ because
 $\bar y$ is H\"{o}lder continuous function on $Q$. The claim is justified. Consequently, 
  Thus $\widehat e$ is H\"{o}lder continuous function on $Q_a$. 

\noindent $\bullet$  On $Q_b$ we have  $\bar u(x, t) = b$. By \eqref{dk.tach.manh},  we have $g[x, t]+\epsilon \bar u(x, t)<0$ on $Q_b$. Hence  $e(x, t)=0$ on $Q_b$ and so  $e$ is H\"{o}lder continuous on $Q_b$.  Again, we have from $(ii)$ of Theorem \ref{Theorem 1} that $\widehat e (x, t) = \varphi(x,t) - L_u[x, t]$  on $Q_b$.  This implies that $\widehat e$ is  H\"{o}lder continuous function on $Q_b$.

\noindent $\bullet$  On $Q_0$ we have $\epsilon\bar u(x, t) + g[x, t]=0$. This and \eqref{dk.tach.1} imply that $\bar u(x, t) >a$. By \eqref{dk.tach.manh}, we get $\bar u(x, t)<b$. Hence $\widehat e(x, t)=0$ on $Q_0$ and so it is H\"{o}lder continuous on $Q_0$. Again, from $(ii)$ of Theorem \ref{Theorem 1} we have
\begin{align}
    e(x,t) = \frac{1}{\epsilon} (\varphi(x, t) -  L_u[x, t]).
\end{align} It follows that $e$ is H\"{o}lder continuous on $Q_0$.

\noindent \textbf{Step 3.}  Showing that $\bar u $ is H\"{o}lder continuous on $\bar Q$. 

Put $\Psi(x, t) := \epsilon e(x, t) + \widehat e (x, t), \ (x, t) \in Q$, then  from the step 2, $\Psi \in C^{0, \alpha}(\bar Q)$.     Fixing any $(x_i, t_i)\in Q$ with $i=1,2$ and using a Taylor' expansion, we have
      \begin{align*}
      &|L_u(x_1, t_1, \bar y(x_1, t_1), \bar u(x_1, t_1))-L_u(x_1, t_1, \bar y(x_1, t_1), \bar u(x_2, t_2)|\\
      &=|L_{uu}(x_1, t_1, \bar y(x_1, t_1), \bar u(x_2, t_2) +\theta(\bar u(x_1, t_1)-\bar u(x_2, t_2))||(\bar u(x_1, t_1)-\bar u(x_2, t_2))|\\
      &\geq \gamma_1 |(\bar u(x_1, t_1)-\bar u(x_2, t_2))|,
      \end{align*}
where $\theta\in [0,1]$ and the last inequality follows from $(H6')$. Combining this with equality $L_u[\cdot, \cdot] = \varphi - \Psi$, we get 
\begin{align}
\gamma_1 |(\bar u(x_1, t_1)-\bar u(x_2, t_2))|&\leq |L_u(x_1, t_1, \bar y(x_1, t_1), \bar u(x_1, t_1))-L_u(x_1, t_1, \bar y(x_1, t_1), \bar u(x_2, t_2))| \nonumber \\
&\leq |L_u(x_1, t_1, \bar y(x_1, t_1), \bar u(x_1, t_1))-L_u(x_2, t_2, \bar y(x_2, t_2), \bar u(x_2, t_2)| \nonumber \\
&+ |L_u(x_2, t_2, \bar y(x_2, t_2), \bar u(x_2, t_2)-L_u(x_1, t_1, \bar y(x_1, t_1), \bar u(x_2, t_2))| \nonumber  \\
&=|\varphi(x_1, t_1)-\varphi(x_2, t_2) -\Psi(x_1, t_1) + \Psi(x_2,t_2)| \nonumber  \\
&+|L_u(x_2, t_2, \bar y(x_2, t_2), \bar u(x_2, t_2)-L_u(x_1, t_1, \bar y(x_1, t_1), \bar u(x_2, t_2))| \nonumber  \\
&\leq |\varphi(x_1, t_1)-\varphi(x_2, t_2)| +|\Psi(x_1, t_1) - \Psi(x_2,t_2)| \nonumber  \\
&+  k_{L, M}(|x_1 - x_2| + | t_1 - t_2| + | \bar y(x_1, t_1) - \bar y(x_2, t_2)|). \label{L.baru}
\end{align} 
Since  $\varphi, \Psi$ and $\bar y$ are H\"{o}lder continuous function on $Q$, the estimation (\ref{L.baru}) shows that $\bar u$ is H\"{o}lder continuous on $\bar Q$. The proof of the theorem is complete.    
\end{proof}

\medskip

\noindent {\bf Acknowledgment} This research was supported by International Centre for Research and Postgraduate Training in Mathematics, Institute of Mathematics, VAST, under grant number ICRTM02-2022.01, and by Vietnam National Foundation for Science and Technology Development (NAFOSTED) under grant number 101.01-2019.308.

\end{document}